 \newcommand{\resp}{{\it resp.} }
\newcommand{\cf}{{\it cf.} }
\newcommand{\ie}{{\it i.e.} }
\newcommand{\eg}{{\it e.g.} }
\newcommand{\loccit}{{\it loc. cit.} }
\newcommand{\sC}{\mathcal{C}}
\newcommand{\sD}{\mathcal{D}}
\newcommand{\sE}{\mathcal{E}}
\newcommand{\sF}{\mathcal{F}}
\newcommand{\sG}{\mathcal{G}}
\newcommand{\sS}{\mathcal{S}}
\newcommand{\sL}{\mathcal{L}}
\newcommand{\sM}{\mathcal{M}}
\newcommand{\sO}{\mathcal{O}}
\newcommand{\sT}{\mathcal{T}}
\newcommand{\sU}{\mathcal{U}}
\newcommand{\sX}{\mathcal{X}}
\newcommand{\sY}{\mathcal{Y}}
\newcommand{\sZ}{\mathcal{Z}}
\newcommand{\C}{\mathbf{C}}
\newcommand{\Q}{\mathbf{Q}}
\newcommand{\F}{\mathbf{F}}
\newcommand{\N}{\mathbf{N}}
\newcommand{\R}{\mathbf{R}}
 \newcommand{\inj}{\hookrightarrow}
\renewcommand{\epsilon}{\varepsilon}
\newcounter{spec}
\newtheorem{thm}{Theorem}[subsection]
\newtheorem{lemma}[thm]{Lemma}
\newtheorem{prop}[thm]{Proposition}
\theoremstyle{definition}
\newtheorem{defn}[thm]{Definition}
\newtheorem{ex}[thm]{Example}
\newtheorem{rem}[thm]{Remark}
\numberwithin{equation}{section}
\begin{document}  

 \title[Uniform sheaves and differential equations]{Uniform sheaves and differential equations}
 \author{Yves
Andr\'e}

\address{D\'epartement de math\'ematiques, Ecole Normale Sup\'erieure\\ \break 45 rue d'Ulm, 75005
Paris\\France.}
\email{yves.andre@ens.fr}
% \date{\today}
 \keywords{{Uniformity, proximity, quasi-uniformity, uniform covering, uniform sheaf, $\sD$-module, singularity, De Rham cohomology, index formula, moderate growth, overconvergence, tangential base-point, adic space}.} \subjclass{12H25, 14F30, 14F40, 18F20, 32S40, 54B40, 54E05, 54E15.}

   \bigskip\bigskip\bigskip

%\rightline{to Francesco}

 \medskip \begin{abstract}  Real blow-ups and more refined ``zooms" play a key role in the analysis of singularities of complex-analytic differential modules.
They do not change the underlying topology, but the uniform structure. 

This suggests to revisit the cohomology theory of differential modules with help of a suitable new notion of {\it uniform sheaves} based on the uniformity rather than the topology.
 We also investigate the $p$-adic situation (in particular, the analog of real blow-ups) from this uniform viewpoint. \end{abstract} 
\bigskip
  
   \maketitle
    \tableofcontents

  \begin{sloppypar}

\newpage
 \section*{Introduction} 
{\flushright{ \begin{quote}{\small{ ``Les points ne commenceront  \`a peser
que lorsqu'on saura les capter correctement,
non pas comme des figures g\'eom\'etriques,
mais bel et bien comme des puissances
d'explosion. C'est ainsi qu'il faut comprendre
le calcul diff\'erentiel."}} \end{quote}}}

\smallskip\rightline{\small{G. Ch\^atelet  \cite[p. 135]{Cha}.}}

\bigskip\bigskip
 
\subsection{} The study of singularities of a linear differential equation $Ly=0$ with meromorphic coefficients is a rich and delicate topic (\cf \eg \cite[ch. 7, 9]{PS}), which has its roots in the works of Fuchs and Poincar\'e.

One first localizes the problem over an open disk $D$ where the coefficients are analytic, except at the origin where they may have a pole.  
 One may then pass to the formal completion, \ie consider the coefficients of $L$ as formal power series; the study becomes purely algebraic, and it turns out that, after ramification, the differential operator factors as a product of differential operators of order one. 

On the other hand, one may restrict to the punctured disk $D^\ast$, \ie consider the coefficients of $L$ as analytic functions outside the origin; the study becomes purely topological, the differential operator being controlled by the local monodromy.

In order to complete the study of $L$, one then needs a bridge between the (algebraic) formal theory and the (topological) theory over $D^\ast$, which is provided by the theory of asymptotic expansions. This requires some kind of ``zoom" on the singularity ({\it real blow-up} \cite{Sa}\cite{PS}, Deligne's halos \cite{De2}\cite{LP} or subanalytic site \cite{KS}\cite{Mo}) and techniques from {\it sheaf theory}.
  
  This paper grew out of a reflection on the nature of such ``zooms". Restricted to $D^\ast$, the real blow-up does not change the topology (nor the real-analytic structure), but changes the {\it uniform structure} (\cf 2.1). This suggests to revisit the theory of meromorphic differential equations, replacing sheaves by a new notion of {\it uniform sheaves} based on the uniformity rather than the topology.  
 
\subsection{}  Uniform structures (or uniformities) were invented by A. Weil in order to axiomatize, in a qualitative way, the properties of the $\epsilon$-neighborhoods of the diagonal which occur in the definition of uniform continuity, Cauchy sequences and completeness, on a metric space (we recall the main definitions and properties of uniformities, and related proximities, in  \S 1). Soon after Weil, J. Tukey reformulated the notion of uniformity in terms of  ``uniform coverings". In this introduction, we restrict our attention to {\it precompact} uniform spaces $X$, those for which any uniform covering admits a finite subcovering (this amounts to the compacity of the completion $\hat X$). 

According to whether one wishes to patch uniformly compatible local data, or to patch compatible uniform local data, one is led to two different notions of uniform sheaves (\S 3). The first one is more useful and is the one which we develop in detail in this paper: assuming that $X$ is precompact, one defines a Grothendieck topology using the open subsets of the topological space underlying $X$ and the uniform open coverings\footnote{in the non precompact case, the definition has to be modified, \cf  3.1.}; a uniform sheaf on $X$ is a sheaf for this Grothendieck topology. 

For instance, bounded functions and uniformly continuous functions give rise to uniform sheaves (of course, they do not form sheaves in the usual sense).

\smallskip We prove that the topos of uniform sheaves on $X$ is equivalent to the topos of sheaves on $\hat X$. It is functorial with respect to uniformly continuous maps, and depends only on the proximity space underlying $X$.

 \subsection{}  When $X$ is a complex smooth open algebraic curve, endowed with the ``sectorial uniformity", a useful example $\sO^{mod}_X$ of uniform sheaf is provided by analytic functions with moderate growth. 
 
 Revisiting the cohomology theory of meromorphic differential modules from the uniform viewpoint, we show that the De Rham cohomology of an algebraic connection on $X$ can be identified with moderate uniform De Rham cohomology, and also with the cohomology of the constructible uniform sheaf of solutions of connection in $\sO^{mod}_X$ (4.1.1). This leads to a ``combinatorial" interpretation of Deligne's index formula, without ever leaving the topological space $X(\C)$. We briefly mention finer ``zooms".
 
  \subsection{}  We also investigate the $p$-adic situation from the uniform viewpoint. We exhibit an analogy between a real blow-up (viewed as a change of uniformity which, at the level of completions, adds a circle of tangential base-points), and the passage from Berkovich analytic spaces to Huber adic spaces. Our starting point is an observation by J. Rivera-Letelier and M. Baker: given an affinoid space $X= {\rm{Spm}}\,A$, the collection of its rational domains can be used to define uniform coverings (rather than to build a Grothendieck topology, as rigid geometry does);  the completion of $X$ for this uniformity is then nothing but the Berkovich space $X^{an}$ attached to $X$. 
  
  On the other hand, in dimension one, adification adds ``circles" of tangential points. Would then the Huber space $X^{ad}$ attached to $X$ be a completion of $X$ for a finer uniformity?
  
   No, because of the dissymetry between non-closed points and closed tangential points in their closure. Yes, if one accepts to drop the symmetry condition in the definition of a uniformity (2.4.1). This provides a nice instance of the well-developed theory of quasi-uniformities (\cf \cite{Ku}). We analyse the corresponding uniform topoi, and discuss overconvergence from the adic viewpoint.
  
  \smallskip At the end of the paper, we discuss in some detail the cohomology theory of $p$-adic differential modules and the $p$-adic index formula, 
  taking the complex situation as a guiding thread.

 \medskip {\it Acknowledgements.}  This paper owes much to the inspiring conversations which I have enjoyed over the years with Francesco Baldassarri, and it is a pleasure to dedicate it to him with friendship and gratitude. 

 I also thank Lorenzo Ramero for discussions, a decade ago, by which I became aware of the uniform nature of the real blow-up and of the significance of Huber's local monodromy.
 \bigskip 

\bigskip 

\bigskip 
 \section{Uniform structures}
 \subsection{Uniformity and proximity}
 General topology formalizes the notions of closeness (to a given point or subspace) and continuity in a qualitative way, independently of any distance.
 
 In the mid 30s, two independent attempts were made to formalize the notions of relative closeness (of points, or subspaces) and uniform continuity in a qualitative way: Weil's theory of uniform spaces (and its reformulation by Tukey), and Efremovich's theory of proximity spaces\footnote{already foreseen by F. Riesz at the Bologna congress in 1908.}. We refer to \cite{BHH} for a historical survey\footnote{see also \cite{Au} about filters and uniformities according to Cartan and Weil. Commenting \cite{W} in 1979, Weil wrote: ``Avec le recul que donnent les quarante derni\`eres ann\'ees, on sourira sans doute du z\`ele que j'apportais alors \`a l'expulsion du d\'enombrable".}, and to \cite[ch. 2 and 9]{Bou}\cite{I} and \cite{NW} as references for the results recalled below.

\begin{defn}\label{def1}
A \emph{uniformity} (\`a la Weil \cite{W}\cite{Bou}) on a set $X$ is a filter $\sU$ on $X\times X$ which is {\it reflexive} (\ie all $E\in \sU$ contain the diagonal $\Delta_X$), {\it symmetric} (\ie $E\in \sU $ iff $E^{-1}\in \sU$) and {\it cotransitive} (\ie for any $E\in \sU$, there is $E'\in \sU$ such that $E'\circ E'\subset E$). The elements of $\sU$ are called {\it entourages}\footnote{viewing them as a binary relations (= correspondences) between elements of $X$, we use the language of correspondences: we write $\circ$ for their composition, $E^{-1}$ for $\{(y,x)\mid (x,y)\in E\}$, and $E(x)$ for $\{y\in X, (x,y)\in E\}$. More generally, if $A\subset X$, $E(A)$ stands for $\{y\in X, \exists x\in A, (x,y)\in E\}$.}.\end{defn}

\begin{defn}  A \emph{uniformity} (\`a la Tukey \cite{T}) on a set $X$ is a family $\sT$ of coverings of  $X$ by subsets, which is stable under {\it finite intersection}, {\it contains any covering which is refined by a covering in $\sT$}, and such that {\it any $(A_i)_i\in \sT$ has a star-refinement} in $\sT$ (\ie a refinement $(B_j)_j $ such that for any $j_0$, the union of all $B_j$'s  meeting $B_{j_0}$ is contained in some $A_i$).\end{defn} 

\smallskip These definitions are actually equivalent: given $\sU$, $\sT$ is the set of coverings which admit a refinement of the form   $(E(x))_{x\in X}$ for some $E\in \sU$; conversely, given $\sT,\,\sU$ is the filter on $X\times X$ with basis $\cup_i \,(A_i\times A_i)$, for $(A_i)_i\in \sT$.

\begin{defn}  A \emph{proximity} (\cf \cite{NW}) on a set $X$ is a binary relation $\nu$ between non-empty subsets of $X$ (``$A$ is {\it near} $B$"), which is {\it reflexive}, {\it symmetric}, and {\it distributive} (\ie $A\,\nu\, (B\cup C)$ iff $A\,\nu\, B$ or $A\,\nu\, C$)$\,$ [one often also adds the axiom: $(\forall C, (A\,\nu\, C) \,{\rm{or}}\, \,(B\,\nu\,  (X\setminus C))) \Rightarrow  A\,\nu\,  B$]. \end{defn}

A proximity can alternatively be described in terms of $\nu$-neighborhoods ($B$ is a $\nu$-neighborhood of $A$ iff $A$ is not near the complement of $B$).

\medskip One has the following logical relations between these notions:

\smallskip \centerline{Metric $\to$ uniformity $\to$ proximity $\to$ topology.} 

\smallskip Let us describe these relations.

The proximity $\nu$ attached to a uniformity $\sU$ is defined by $\,A\,\nu \,B$ iff $ (A\times B)\cap E \neq \emptyset$ for all $E\in \sU\,$  (equivalently, iff $A$ and $B$ meet some $A_i$ for any $(A_i)_i \, \in \sT$).

If $\sU$ comes from a metric, \ie if the $\epsilon$-neighborhoods (in the metric sense) of the diagonal form a basis of the filter $\sU$, then $A\,\nu \,B$ iff $A$ is at distance $0$ from $B$ (and $B$ is a $\nu$-neighborhood of $A$ iff it contains some $\epsilon$-neighborhood of $A$).

  A proximity defines a topology on $X$: the closure $\bar A$ of a subset $A$ is the set of $x$ such that $\{x\}\,\nu\, A$. If the proximity comes from a uniformity $\sU$, then $\bar A= \cap_{E\in \sU}\, E(A)$. For any $x\in X$, the $E(x)$ for $E\in \sU$  (equivalently, the $\cup_{\{i\mid  x\in A_i\}}\,  A_i $ for $ (A_i)_i\in \sT$ ) form a basis of neighborhoods of $x$.

 \medskip The topology induced by a proximity $\nu$ (\resp a uniformity $\sU$) is {\it Hausdorff} iff $\{x\}\,\nu\,\{y\}\,\Rightarrow x=y$ (\resp $\cap_{E\in \sU}\, E=\Delta_X$).  One can show that a Hausdorff uniformity $\sU$ comes from a metric iff it admits a countable basis.

\begin{defn}  A map $f: X\to Y$ is {\it uniformly continuous} iff for any $E\in \sU_Y,\; (f,f)^{-1}(E)\in \sU_X$ (equivalently, for any $(A_j)_j\in \sT_Y$, $(f^{-1}(A_j)_j\in \sT_X$).\end{defn} 
 It is then {\it $\nu$-continuous} for the induced proximity, in the sense that $A\,\nu B\,\Rightarrow f(A)\, \nu \, f(B)$. Any $\nu$-continuous map is continuous for the induced topology. 
This provides forgetful functors:

\smallskip \centerline{$\{$uniform spaces$\}  \to  \{$proximity spaces$ \}  \to  \{$topological spaces$\}$.} 

\smallskip A topological space is uniformisable iff  any closed set $F$ and any point outside $F$ can be separated by some continuous function. Any Hausdorff proximity space is uniformisable.

A Hausdorff compact topological space $X$ admits a unique uniformity (\resp proximity). Any continuous map $X\to Y$ between a Hausdorff compact space $X$ and a uniform space $Y$ is  uniformly continuous (\cf \cite[ch. 2, \S 4]{Bou}).
  
\subsection{Completion} Let $(X,\sU)$ be a uniform space.  We shall often skip $\,\sU\,$ in the notation. 

\begin{defn} A filter $\sF$ on $X$ is {\it Cauchy} iff  for all $E\in \sU$, there is $x\in X$ such that $E(x)\in \sF\,$ (equivalently, iff 
$ \forall E\in \sU, \, \exists A \in \sF, \,  A\times A\subset E$, \cf \cite[4.2]{MN}).

A uniform space $X$ is {\it complete} iff every Cauchy filter has a cluster point (equivalently, iff it converges). \end{defn} 
 
Any uniformly continuous map from a dense subset of a uniform space $X$ into a complete uniform space $Y$ can be extended (uniquely) into a uniformly continuous map on all of $X$.

Any uniform space $X$ has a unique Hausdorff {\it completion}, namely a universal  Hausdorff complete uniform space $\hat X $ with a uniformly continuous map $\iota: X \to \hat X$. 
 The image $\iota(X)$ is dense in $\hat X$, and the uniformity (\resp topology) of $X$ is the inverse image of that of $\hat X$.

\begin{defn}   A uniform space $X$ is {\it precompact} iff for any $E\in \sU$, $X= E(Z)$ for some finite subset $Z \subset X$  (equivalently, iff every ultrafilter is Cauchy; or else, any $(A_i)_i\in \sT$ has a finite subcovering).\end{defn} 

$X$ is precompact iff $\hat X$ is compact.  The uniformity of $X$ is then uniquely determined by the topology of $\hat X$, and is also determined by the proximity of $X$. 

In fact, on any topological space $X$,  {\it compatible Hausdorff proximities $\nu$ classify compactifications $\bar X$}: given $\bar X$, 
$A\,\nu    \,B \Leftrightarrow \bar A^{\bar X} \cap \bar B^{\bar X}\neq \emptyset$;  conversely, given $\nu$,  the associated (Smirnov)  compactification $\bar X $  is the completion of $X$ with respect to the coarsest uniformity compatible with $\nu$ (\cf \cite[7.7, 12.5]{NW}).

   \bigskip
\section{Examples: real blow-up, Berkovich vs. Huber spaces}  

 {\flushright{ \begin{quote}{\small{``Pas un point [...], mais une petite fl\`eche qui est l\`a et qui jaillit hors du point: c'est ce que j'appelle une fulguration."}} \end{quote}}}
 
\smallskip\rightline{\small{G. Ch\^atelet \cite[p. 141]{Cha}.}}

 \medskip \subsection{Real blow-up and sectorial uniformity}
 Let $\,\Delta^\ast = \{z\in \C, \,0< \vert z\vert\leq 1\}\,$  be the punctured circled unit disk. 

 Viewed as the subset $ \{ (x,y)\in \R^2, \, 0< x^2+y^2\leq 1\} $ of $\R^2$, it inherits a metric uniformity, denoted by  $\Delta^\ast_{(met)} $. The Tukey coverings are those which admit a finite refinement $(A_i)_i$ such that the non-relatively compact $A_i$'s contain a small punctured disc (centered at $0$).
 
\smallskip  On the other hand, viewed in polar coordinates as the subset $\{ ({r}, \theta)\in ]0,1] \times S^1\}$, $\Delta^\ast $ inherits another metric uniformity, the {\it sectorial uniformity} denoted by $\Delta^\ast_{(sec)} $. The Tukey coverings are those which admit a finite refinement $(A_i)_i$ such that the non-relatively compact $A_i$'s contain an open sector (centered at $0$).  
 
Passing from polar to cartesian coordinates  ($x= {r} \cos \theta, y= {r}\sin \theta$) induces a uniformily continuous homeomorphism  $\,\Delta^\ast_{(sec)} \stackrel{\rho}{\to} \Delta^\ast_{(met)} $, as follows from the formula $$(x-x')^2+(y-y')^2= ({r} -{r}')^2 + 2{r}{r}'(1-\cos (\theta -\theta'))\leq ({r} -{r}')^2 +  {r}{r}'(\theta -\theta')^2.$$  Its inverse is not uniformly continuous. The proximities differ: for $\Delta^\ast_{(met)} $, two open sectors (centered at $0$) are always near; for $\Delta^\ast_{(sec)} $, they are near iff they overlap.

\smallskip   The completion of $\Delta^\ast_{(met)} $ is the compact unit disk $\Delta$, while the completion of $\Delta^\ast_{(met)} $ is the compact cylinder $ \Delta^{sec} =  [0,1]\times S^1 $ obtained by adding to $\,\Delta^\ast$ the circle $\{0\}\times S^1\cong \mathbb P^1(\R)$ of {\it tangential base-points} (sectors bissected by  $\theta$  provide a basis of a Cauchy  filter converging to $\theta$ ). One thus has a commutative square which expresses the real blow-up in the category of precompact uniform spaces:
          $$\begin{matrix} \Delta^{sec}  = \widehat {\Delta^\ast_{(sec)}} &\stackrel{\hat\rho}{\to}& \Delta = \widehat {\Delta^\ast_{(met)}}\\  {\uparrow}  &  & \uparrow \\ \Delta^\ast_{(sec)}& \stackrel{\rho}{\to}& \Delta^\ast_{(met)} .  \end{matrix}$$

 \medskip  Real blow-up applies more generally in the situation of the complement $X$ of a divisor with
normal crossings $Z $ in a complex analytic manifold.
In the sequel, we will view real blow-ups as
mere changes of uniform structures on $X$, or even mere changes of proximities, and
 try to avoid manifolds with boundary.

  \subsection{Berkovich vs. Huber spaces}\label{BH}
 
  Let now $\,\Delta^\ast = \{z\in \C_p, \,0< \vert z\vert\leq 1\}\,$  be the $p$-adic punctured circled unit disk. 
It inherits a uniformity induced by the metric of $\C_p$. Its completion is the circled unit disk $\Delta$. 

To cope with the total discontinuity of $\Delta$, {\it rigid geometry} exploits its geometric structure of affinoid space: $\Delta = {\rm Spm} \,\C_p\langle T\rangle$.
 For any {\it affinoid space} $X ={\rm Spm} \, A\,$  (maximal spectrum of a quotient $A$ of the Tate algebra of converging functions on the closed unit polydisk), rigid geometry replaces the naive totally discontinuous topology by a Grothendieck topology $X^{rig}$ defined in terms of {\it rational domains}
$$ X(\frac{f_1,\dots , f_n}{g}) =\{x\in X,\, \vert f_i(x)\vert \leq \vert g(x)\vert\}$$ where $f_i, g\in A$ and $A= \sum Af_i $.

\smallskip  Recently, in the context of $p$-adic dynamics, J. Rivera-Letelier \cite{RL} (for the line) and M. Baker \cite{Bak} (for any affinoid space) introduced the idea of {\it using rational domains\footnote{technically, they use rather open Laurent domains for convenience, but rational domains also work and have better stability properties, \cf \cite[final rem.]{Bak}. On the other hand, the extension of the result to general rigid spaces is not known.} to define a uniform space $X_{(an)}$ rather than a Grothendieck site $X^{rig}$}. 

\begin{prop}\cite{Bak} Coverings of the affinoid space $X$ which admit a finite refinement by rational domains form a Tukey uniformity. The completion of the associated uniform space  $X_{(an)}$  is nothing but the Berkovich analytic space $X^{an}$ attached to $X^{rig}$.  In particular, $X_{(an)}$ is precompact.
 \end{prop}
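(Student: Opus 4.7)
The plan is to use the compact Berkovich analytic space $X^{an}$ as scaffolding: since any Hausdorff compact topological space carries a unique uniformity (\cf \S 1.1), and since $X$ embeds densely in $X^{an}$, the Tukey axioms on $X$ will follow by pullback from the unique uniformity on $X^{an}$, and the completion identification will follow from the universal property of the Hausdorff completion.

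First I would recall Berkovich's construction: $X^{an}$ is the set of bounded multiplicative seminorms $x\colon A\to\R_{\geq 0}$, endowed with the coarsest topology making each evaluation $x\mapsto\vert f(x)\vert$ ($f\in A$) continuous. It is compact Hausdorff, and the maximal points $X={\rm Spm}\,A$ (seminorms with residue field a finite extension of $\C_p$) form a dense subset. Each rational domain $U=X(f_1,\ldots,f_n/g)\subset X$ extends to a compact subset $U^{an}=\{x\in X^{an}\mid \vert f_i(x)\vert\leq\vert g(x)\vert\}$ of $X^{an}$, satisfying $U^{an}\cap X=U$; conversely every rational subdomain of $X^{an}$ arises this way. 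The crucial structural input is that the collection of rational subdomains forms a basis for the topology of $X^{an}$; combined with compactness, this implies that every open cover of $X^{an}$ admits a finite refinement by rational domains.

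Granted this, the finite coverings of $X^{an}$ by rational domains coincide with its finite open coverings, hence form exactly the unique Tukey uniformity $\sT^{an}$ on the compact space $X^{an}$. Its pullback to $X$ under the dense inclusion is precisely the family $\sT$ of finite rational coverings of $X$, since rational domains and their intersections on $X$ and on $X^{an}$ correspond bijectively. Therefore $\sT$ satisfies the Tukey axioms: closure under finite intersection (intersections of rational domains are rational), refinement stability (built into the definition), and existence of star refinements (inherited from $\sT^{an}$). The universal property of the Hausdorff completion then identifies $\widehat{X_{(an)}}$ with $X^{an}$, and precompactness of $X_{(an)}$ is equivalent to compactness of $X^{an}$.

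The hard part will be the structural input invoked above: that rational domains form a basis of the topology of $X^{an}$, or equivalently that every finite open cover of $X^{an}$ admits a finite refinement by rational domains. This rests on Berkovich's seminorm description of points together with the Gerritzen--Grauert theorem in rigid geometry; once it is in hand, everything else is formal.
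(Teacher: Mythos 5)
Your overall strategy --- transport the unique uniformity of the compact Hausdorff space $X^{an}$ back to the dense subset $X$ and invoke the universal property of the Hausdorff completion --- is the natural one, and since the paper gives no proof (it cites Baker's preprint) I can only judge the argument on its merits. The formal ingredients (uniqueness of the uniformity on a compact Hausdorff space, completion of a dense uniform subspace) are fine. The gap is in your ``crucial structural input'': rational domains are \emph{not} open in $X^{an}$ and do not form a basis of its topology. They are closed (compact) subsets, and an individual rational domain need not even be a neighborhood of each of its points: $\{\vert T\vert\geq 1\}\subset \Delta^{an}$ is a rational domain containing the Gauss point $\eta$ but no neighborhood of it. What Berkovich (via Gerritzen--Grauert) gives is that \emph{finite unions} of rational domains form a fundamental system of non-open neighborhoods of each point. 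Consequently the pivotal sentence ``the finite coverings of $X^{an}$ by rational domains coincide with its finite open coverings, hence form exactly the unique Tukey uniformity $\sT^{an}$'' fails in both halves: a finite covering of a compact Hausdorff space by closed sets is in general not a uniform covering (there is no Lebesgue entourage), just as $[0,1]=[0,\tfrac12]\cup[\tfrac12,1]$ is not a uniform covering of $[0,1]$.

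This is not cosmetic; it is where the whole content of the proposition sits. Take $X=\Delta={\rm Spm}\,\C_p\langle T\rangle$ and the covering by the two rational domains $U_1=\{\vert T\vert\leq r\}$, $U_2=\{\vert T\vert\geq r\}$ with $0<r<1$ in the value group. Its extension $U_1^{an}\cup U_2^{an}=\Delta^{an}$ is \emph{not} a uniform covering of $\Delta^{an}$: every neighborhood of the point $\eta_r$ (sup-norm on the disk of radius $r$) contains classical points with $\vert T\vert <r$ and with $\vert T\vert>r$, so no entourage $E$ has all its sets $E(x)$ inside one of the two pieces. Hence the family of coverings of $X$ with finite rational refinements cannot be matched with the pullback of $\sT^{an}$ by the purely formal dictionary you propose; one must verify the Tukey axioms --- above all the star-refinement axiom --- for rational coverings of the set of \emph{classical} points directly, and this is delicate precisely because the troublesome points (such as $\eta_r$) are not classical. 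Indeed, for $(U_1,U_2)$ any finite rational covering $(B_j)$ of $X$ has a member accumulating at $\eta_r$ from $\{\vert T\vert<r\}$ and a member accumulating at $\eta_r$ from $\{\vert T\vert>r\}$; both corresponding affinoids contain $\eta_r$, hence (Nullstellensatz) meet in a classical point, which obstructs the star-refinement one would need. This is exactly why Baker works with \emph{open} Laurent domains (cf.\ the paper's footnote), and why the extension to closed rational domains requires a genuine argument rather than the reduction you describe as ``formal''. You should either switch to open Laurent domains throughout, or supply the missing verification of the star-refinement axiom and of the identification with the subspace uniformity.
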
  
 
 \begin{rem} $(\Delta, +)$ is a topological group. The translations are uniformly continuous on $\Delta_{(an)}$, but $+$ itself is not uniformly continuous (it does not extend to $\Delta^{an}\times \Delta^{an}$).  
 \end{rem}
 
\smallskip  In analogy with the complex case mentioned above, and with an eye toward applications to differential equations, one may want to add {\it tangential base-points} to the picture.
  
  A natural framework where $p$-adic tangential base-points exist is Huber's theory of {\it adic spaces}.
 The main differences between the Berkovich space $X^{an}$ and the Huber space $X^{ad}$ attached to an affinoid algebra $A$ is that, while the topological space $X^{an}$  is defined purely in terms of continuous multiplicative semi-norms of rank $1$ (\ie real-valued), $X^{ad}$ is the set of continuous multiplicative semi-norms of any rank, and is endowed with the topology generated by rational domains.   $X^{ad}$ is a spectral space\footnote{\ie quasi-compact and sober, with a basis of quasi-compact open subsets stable under finite intersection (equivalently, homeomorphic to a space ${\rm{Spec }}\, B$, \cf \cite{Ho}).}. The natural inclusion $X^{an}\to X^{ad}$ is not continuous, but admits a continuous retraction $\,X^{ad}\stackrel{\rho}{\to} X^{an}$, by means of which $X^{an}$ is the maximal Hausdorff quotient of $X^{ad}$.
 
 We refer to \cite{Berk1}\cite{Hu1} for $p$-adic analytic and adic spaces respectively. Let us describe briefly the case $\Delta = {\rm Spm} \,\C_p\langle T\rangle$. 
 
 In the ``tree-like" analytic unit disk $\Delta^{an}$, one distinguishes four types of points: 
 
 - type 1: these are the points in $\Delta$ (``ends of the tree").
 
 - type 2: they correspond to the sup-norm on disks of radius $r\in p^{\Q_{\leq 0}}$ (``branching points of the tree"). The root $\eta$ of the tree corresponds to the case $r=1$.
 
 - type 3: they correspond to the sup-norm on disks of radius $r\notin p^{\Q_{\leq 0}}$.
 
 - type 4: they correspond to  nested sequences of disks with empty intersection (``dead-ends of the tree"). 
 
 \smallskip The overconvergent adic unit disk $\Delta^{ad \dagger}$ is obtained by ``attaching" to each point $x$ of type 2  a ``circle" ($\cong \mathbb P^1(\bar\F_p)$) of tangential base-points $\theta$, which are closed and belong to the closure of $x$. The affinoid adic unit disk $\Delta^{ad}$ is the complement in $\Delta^{ad, \dagger}$ of one specific tangential base-point $\vec{\eta\infty}$, which lies in the closure of $\eta$ (in fact $\Delta^{ad, \dagger}$ is the closure of $\Delta^{ad}$ in the adic affine line;  the passage from $\Delta^{ad}$ to $\Delta^{ad \dagger}$ is a special case of a universal construction in the world of adic spaces, \cf \cite[1.10]{Hu1}\cite[5.9, 5.10]{Hu2}).

 \medskip Now the question arises whether like $\Delta^{an}$,  $\Delta^{ad}$ is the completion of $\Delta$ with respect to some uniformity.  The answer is negative. In fact, there is no compatible proximity at all on $\Delta^{ad}$, because of the ``asymmetry" of its topology: if $x$ is a point of type $2$ and $ \theta$ a tangential base-point in the closure of $x$, then $\theta$ would be near $x$ since it is in $\overline{\{x\}}$, but $x$ would not be near $\theta$ since $\{\theta\}$ is closed. The only hope to put a compatible ``uniformity" on adic spaces is thus to relax the notion of uniformity by dropping the symmetry condition.

 \subsection{A detour through asymmetric topology}
  In fact, in the mid 1950s, Nachbin and others have tried to formalize the common properties of uniform spaces and partial orders by {\it dropping the symmetry condition}. This led to the theory of {\it quasi-uniformity} and {\it quasi-proximity} (same definitions as above\footnote{beware that the equivalent statements mentioned in parenteses are no longer equivalent in this generalized setting.}, but without the symmetry condition on $\sU$ and on $\nu$, \cf \cite{MN}). We refer to \cite{Ku} for a historical survey, and for references for the results recalled below. 

\medskip One has forgetful functors:

\smallskip \centerline{$\{$quasi-uniform spaces$\}  \to  \{$quasi-proximity spaces$ \}  \to  \{$topological spaces$\}$}

\smallskip\noindent but now, any topological space becomes quasi-uniformisable - in fact in several functorial ways, for instance via the finest compatible quasi-uniformity. The finest compatible quasi-proximity is given by $A\,\nu\, B \Leftrightarrow A\cap \bar B \neq \emptyset\,$\footnote{it is induced for instance by the (functorial) Pervin quasi-uniformity generated by finite intersections of $(X\times U)\cup ((X \setminus U)\times X)$, for open subsets $U\subset X$, \cf \cite[\S 19.7, 19.14]{NW}).}. 

Any locally quasi-compact space $X$ has a coarsest compatible quasi-uniformity (the K\"unzi quasi-uniformity), generated by finite intersections of $(X\times U)\cup ((X \setminus K)\times X)$, for  $K\subset  U$, $K$ quasi-compact,  $U$ open (\cf \cite[\S 3]{Ku}). 
 If $X$ is spectral (or more generally if $X$ has a basis of quasi-compact open neighborhoods), then the K\"unzi quasi-uniformity is also generated by finite intersections of $(X\times U)\cup ((X \setminus U)\times X)$, for any basis of {\it quasi-compact} open subsets $U\subset X$ (indeed, this quasi-uniformity is clearly coarser than the K\"unzi one, and it induces the same topology), and it is functorial with respect to quasi-compact (= spectral) maps.

\smallskip Any continuous map $X\to Y$ between a quasi-compact quasi-uniform space $X$ and a uniform space $Y$ is (quasi-)uniformly continuous \cite{La}\footnote{the author learned this result just after noticing that the counter-example on p. 55 of \cite{MN} is wrong - a fortunate mistake: proving that (along Lambrinos' theorem but contrarily to what is stated in {\it loc. cit.}) the Pervin quasi-uniformity on $[0,1]$ is finer than the unique compatible uniformity is an instructive exercise for a newcomer in asymmetric topology, which can be done with a sheet of graph paper!}.

Cauchy filters, complete and precompact spaces, are defined as above. A topological space is quasi-compact iff it is complete with respect to every compatible quasi-uniformity \cite[\S 4.15]{MN}. We shall sometimes write $Y= \hat X$ to indicate that $X$ is dense in $Y$ and $Y$ is complete with respect to a quasi-uniformity compatible with that of $X$, even though in this asymmetric setting, such a quasi-uniform space $Y$ is by no means unique.

 \subsection{A $p$-adic analog of real blow-ups} 
 Let $X= {\rm{Spec}}\, A$ be again an affinoid space as above. From the above remarks, one gets:
 
 \begin{prop} Finite intersections of subsets of the form $(X\times U)\cup ((X \setminus U)\times X)$, where $U$ runs through the rational domains in $X^{ad}$, generate the coarsest quasi-uniformity compatible with the topology of $X^{ad}$. It is complete, and the continuous map $X^{ad}\to  X^{an}$ is  (quasi-)uniformly continuous. 
 In particular, the quasi-uniformity  $X_{(ad)}$ on $X$  induced by $X^{ad}$ is precompact.  \end{prop}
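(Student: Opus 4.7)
The plan is to deduce the four claims directly from the general theorems on quasi-uniformities recalled in \S 2.3, using a single geometric input from Huber's theory: $X^{ad}$ is a spectral space, and the rational domains form a basis of its topology consisting of quasi-compact open subsets (\cite{Hu1}; each rational domain being itself affinoid, hence spectral, hence quasi-compact).

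Granted this, the first assertion is immediate from the description of the K\"unzi quasi-uniformity recalled in \S 2.3: on a space admitting a basis of quasi-compact opens, it is the coarsest compatible quasi-uniformity, and it is generated by the finite intersections of $(X^{ad}\times U)\cup ((X^{ad}\setminus U)\times X^{ad})$ for $U$ running over any such basis. Completeness then follows from the criterion of \cite[\S 4.15]{MN} cited in \S 2.3 (a topological space is quasi-compact iff it is complete for \emph{every} compatible quasi-uniformity) applied to the quasi-compact $X^{ad}$. Uniform continuity of the retraction $\rho:X^{ad}\to X^{an}$ is in turn a case of Lambrinos' theorem (\cite{La}, cited in \S 2.3): a continuous map from a quasi-compact quasi-uniform space to a uniform space is automatically quasi-uniformly continuous, and $X^{an}$ carries a uniform structure by Proposition 2.2.

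For precompactness of the induced quasi-uniformity $X_{(ad)}$, I would argue directly with the generators. Set $E_U := (X^{ad}\times U)\cup ((X^{ad}\setminus U)\times X^{ad})$, so that $E_U(x)=U$ if $x\in U$ and $E_U(x)=X^{ad}$ otherwise; consequently, for a finite intersection $E=\bigcap_{i=1}^n E_{U_i}$ one has $E(x)=\bigcap_{i:\,x\in U_i}U_i$. Given $x\in X$, any $z\in X$ with the same ``type'' $S(z):=\{i:z\in U_i\}=S(x)$ satisfies $x\in E(z)$. Since the family $(U_i\cap X)_i$ partitions $X$ into at most $2^n$ non-empty Boolean atoms (one per subset $S\subset\{1,\ldots,n\}$ occurring), picking one representative $z_S\in X$ in each atom produces a finite set $Z\subset X$ with $E(Z)=X$.

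Once the foundational facts on $X^{ad}$ (spectrality, basis by rational domains) have been invoked, there is no serious obstacle: everything is a combination of the general recalls of \S 2.3 with an elementary Boolean partition argument. The slightest subtlety is that the representatives $z_S$ must lie in $X$, which is automatic because the atoms in question are defined as subsets of $X$. If desired, the argument can also be rephrased more abstractly: since $X^{ad}$ is quasi-compact, every ultrafilter on $X^{ad}$ is Cauchy for the K\"unzi quasi-uniformity, and this Cauchy condition only involves the generating entourages $E_{U_i}$, so it is inherited by ultrafilters on the subset $X$, giving precompactness.
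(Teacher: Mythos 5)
Your argument is correct and is exactly the one the paper intends (the paper offers no proof beyond ``From the above remarks, one gets''): the K\"unzi quasi-uniformity on the spectral space $X^{ad}$ described via its basis of quasi-compact rational domains, completeness from quasi-compactness via \cite[\S 4.15]{MN}, Lambrinos' theorem for the continuous map to the uniform space $X^{an}$, and your explicit Boolean-atom verification correctly supplies the precompactness detail that the paper leaves implicit. The only point to handle with care is the optional ultrafilter rephrasing at the end: the paper expressly warns that the Cauchy-ultrafilter characterization of precompactness is no longer equivalent to the entourage definition in the asymmetric setting, so you should rely on the direct $E(Z)=X$ argument rather than on that alternative.
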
 

 One thus has a commutative square in the category of precompact quasi-uniform spaces:
          \begin{equation}\label{CD}\begin{matrix}  X^{ad} = \widehat{X_{(ad)}} &\stackrel{\hat\rho}{\to}& X^{an} = \widehat {X_{(an)}}\\  {\uparrow}  &  & \uparrow \\ X_{(ad)}& \stackrel{\rho}{\to}& X_{(an)} .  \end{matrix}\end{equation}
      This  applies in particular to $\Delta $, and the situtation is somehow analogous to the real blow-up setting,  the missing point $0$ being replaced by the missing open unit disk $\Delta^-_\infty$ at $\infty$ ($\Delta = \mathbb P^1(\C_p)\setminus  \Delta^-_\infty$).
 
   \begin{rem} There is a strong analogy between the passage from $X^{rig}$ (with its Grothendieck topology defined in terms of rational domains) to the spectral space $X^{ad}$ on one hand, and the passage from a real-algebraic variety with its semi-algebraic Grothendieck topology to the real spectrum (which is a spectral space) on the other hand. In both cases, this passage does not alter the topoi \cite[1.1.11]{Hu1}\cite[ch. 7]{BCC}. Similarly, there is a spectrum attached to the subanalytic Grothendieck topology of complex analytic manifolds (\cf \cite[\S1]{Mo}). It might be interesting to understand these spectral spaces in terms of quasi-uniformities, especially in view of the role of the subanalytic site in the study of singularities of meromorphic differential modules \cite{KS}\cite{Mo}.
 \end{rem}
  
 \subsection{Formal uniformities and $p$-adic blow-up}\label{form}
On the $p$-adic disk $\Delta$ with its standard formal structure, one can also consider the non-Hausdorff uniformity defined by the equivalence relation $\vert z-z'\vert <1$. Its Hausdorff completion is the discrete line $\mathbb A^1(\bar\F_p)$.   
Blowing up a point in closed fiber $\mathbb A^1(\bar\F_p)$ changes not only the uniform structure (the Hausdorff completion of $\Delta$  becomes   $\mathbb A^1(\bar\F_p)\coprod \mathbb A^1(\bar\F_p)$ ), but also the topology. From this viewpoint, a $p$-adic blow-up is far from being an analog of a real blow-up.
 
More generally, if $Y$ is the Raynaud-Berthelot generic
 fiber of a $\sO_{\C_p}$-formal scheme $\sY$, there is a canonical non-Hausdorff uniform
structure (defined by the tube of the diagonal) for which the completion map is the specialization map $sp: Y \to \vert \sY\vert$, $\vert \sY\vert$ being equipped with the discrete topology.

\bigskip
  \section{Uniform sheaves}
  
  \subsection{Uniform coverings}
  
 Sheaves can be thought as a way of patching compatible local data. Likewise, we shall think of uniform sheaves as a way of {\it patching uniformly compatible local data}, uniformity being understood as a suitable condition on coverings.   
 
 The obvious guess is to use Tukey coverings, but they have a drawback: they don't define a Grothendieck topology in general. For instance,   $(U_i = ]-i, i[)_{i\in \N}$ is a Tukey covering of the (metric) uniform real line $\R$, and for each $i$, $(U_{ij}= ]-i+\frac{j-1}{2}, -i+\frac{j+1}{2}[)_{j=1,\dots, 2i-1}$ is a Tukey covering of $U_i$, but $(U_{ij})_{ij}$ is not a Tukey covering of $\R$.
  For this reason, we shall consider a more flexible notion of {\it uniform covering}. 

 \smallskip Let $X$ be a uniform space, and let $X\stackrel{\iota}{\to} \hat X$ be the Hausdorff completion. 
 
 \begin{defn}
 We say that an open covering $(U_i)_i$ of $X$  is \emph{uniform} iff it is the inverse image by $\iota$ of an open covering of $\hat X$.\end{defn}
 
   This is stronger than asking that $(\hat U_i)_i$ is a covering of $\hat U$: for instance, open sectors $U_i$ (centered at $0$ and of angle $<2\pi$) which cover $\Delta^\ast$ do not form a uniform covering of $\Delta^\ast_{(met)}$ (but they do for $\Delta^\ast_{(sec)}$), alhough their completions form a covering of $\Delta$.  
 
   \smallskip     Tukey open coverings are uniform (indeed, they are the inverse image by $\iota$ of Tukey open coverings of $\hat X$) . The converse is true if $X$ is precompact, in which case any uniform covering admits a finite subcovering.

\begin{lemma}\label{l7}  \begin{enumerate} \item For any open subset $U\subset X$, there is a (unique) biggest open subset $\check U\subset \hat X $  such that $\iota^{-1}(\check U)= U$. 
\item $\check U\subset \hat U = \overline{\iota(U)}$.  If $\iota^{-1}(\hat U)= U$, then $\check U$ is the interior of $\hat U$.
\item $\check U \cap \check U'=  (U\cap U')\check{}\,$.
  \item  For any collection of open subsets $U_i$ of $X$, $\bigcup \check U_i \subset (\bigcup U_i)\check{}\,$.
\item $(U_i)_i$ is a {\emph{uniform}} open covering of a given open subset $U\subset X$ iff $(\check U_i)_i $ is a covering of $\hat U$. 
\item  The topology of any open subset $V\subset \hat X$ has a basis of the form $(\check U_j)$, where $U_j$ is some basis of the topology of $\iota^{-1}(V)$.
\item  Any open subset $V\subset \hat X$ admits a covering of the form $(\check U_i)_i$, where $(U_i)_i$ is some {\emph{uniform}} open covering of $\iota^{-1}(V)$.  
\end{enumerate}
 \end{lemma}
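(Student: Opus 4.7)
Plan: All seven parts follow from the interplay of three ingredients: the maximality characterization of $\check U$ from (1), the density of $\iota(X)$ in $\hat X$, and the regularity of the Hausdorff uniform space $\hat X$. I expect (1)--(5) to reduce to formal diagram-chasing, while (6) carries the only genuinely geometric content of the lemma, and (7) will fall out of (6) combined with (5).

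Parts (1)--(4) are direct manipulations with maximality. For (1), the topology of $X$ is initial for $\iota$ (recalled in \S1.2), so some open $V\subset \hat X$ with $\iota^{-1}(V)=U$ exists; since $\iota^{-1}$ commutes with unions, the union of all such $V$'s is the required (unique) maximal $\check U$. For (2), any such $V$ satisfies $V\cap \iota(X)=\iota(U)$, hence $V\subset \overline{V\cap \iota(X)}=\hat U$ by density and openness; if moreover $\iota^{-1}(\hat U)=U$, then $\iota^{-1}(\mathrm{int}(\hat U))$ is squeezed between $\iota^{-1}(\check U)=U$ and $\iota^{-1}(\hat U)=U$, hence equals $U$, and maximality gives $\mathrm{int}(\hat U)\subset \check U$, while $\check U\subset \mathrm{int}(\hat U)$ because $\check U$ is open and contained in $\hat U$. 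For (3), $\check U\cap \check U'$ is open with preimage $U\cap U'$, so lies in $(U\cap U')\check{}$; conversely $(U\cap U')\check{}\cup \check U$ is open with preimage $(U\cap U')\cup U=U$, so by maximality it lies in $\check U$, and symmetrically in $\check U'$. Part (4) is the same trick for arbitrary unions, and (5) is the analogous bookkeeping: unwinding the definition of a uniform cover of $U$ as an open cover of $\hat U$ pulled back to $U$, maximality forces the lifting opens $W_i\subset \hat X$ to satisfy $W_i\subset \check U_i$, so that $\bigcup W_i\supset \hat U$ translates to $\bigcup \check U_i\supset \hat U$.

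The crux is (6). Given $\hat x\in V$, invoke regularity of the Hausdorff uniform space $\hat X$ to pick a closed neighbourhood $F$ of $\hat x$ with $F\subset V$, set $W=\mathrm{int}(F)$, and $U=\iota^{-1}(W)$. The clinching observation is that $\check U\setminus F$ is open in $\hat X$, while its preimage is $U\setminus \iota^{-1}(F)\subset \iota^{-1}(W)\setminus \iota^{-1}(F)=\iota^{-1}(W\setminus F)=\emptyset$; by density of $\iota(X)$, an open subset of $\hat X$ with empty trace on $\iota(X)$ must itself be empty, so $\check U\setminus F=\emptyset$, i.e.\ $\check U\subset F\subset V$. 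Since $\hat x\in W\subset \check U$, the open sets $\check U$ produced this way form arbitrarily small neighbourhoods of $\hat x$ in $V$, so the family $\sB_V=\{U\subset \iota^{-1}(V)\text{ open}:\check U\subset V\}$ yields $(\check U)_{U\in \sB_V}$ as a basis of $V$; running the same construction inside a preassigned open $W'\subset \hat X$ with $\iota^{-1}(W')\subset \iota^{-1}(V)$ simultaneously exhibits $\sB_V$ itself as a basis of $\iota^{-1}(V)$.

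Finally (7) follows from (6) paired with (5): the basis covering of (6) provides at once $V=\bigcup_j \check U_j$ and an open cover $(U_j)_j$ of $\iota^{-1}(V)$, and the characterization (5) certifies its uniformity (if needed, one enlarges the family of $U_j$'s to ensure the $\check U_j$'s also cover $\hat U$, which is handled by the same regularity construction applied to boundary points of $V$ in $\hat X$). The one step requiring insight is the density/regularity combination in (6) pinning $\check U$ inside the prescribed closed set $F$; everything else in the lemma is a formal consequence of the maximality of $\check U$.
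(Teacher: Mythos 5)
For items (1)--(6) your argument is correct and follows the same route as the paper: maximality of $\check U$ for (1), (3), (4), density of $\iota(X)$ for (2), and regularity of $\hat X$ for (6). Your treatment of (6) is in fact slightly more careful than the paper's: the paper deduces $W=(W\cap X)\check{}\,$ directly from item (2), which tacitly requires $\iota^{-1}(\overline W)=\iota^{-1}(W)$, whereas your squeeze $W\subseteq \check U\subseteq F$ (via ``an open set with empty trace on the dense image is empty'') avoids that issue.

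The one step that does not close is (7). The basis you extract from (6) consists of sets $U_j=\iota^{-1}(\mathrm{int}\,F_j)$ with $F_j$ closed in $\hat X$ and $F_j\subseteq V$; any open subset of $\widehat{\iota^{-1}(V)}=\overline V$ whose trace is such a $U_j$ is forced (by density) into $F_j\subseteq V$, so this family can never be a uniform covering of $\iota^{-1}(V)$ once $\overline V\neq V$: by definition a uniform covering is the trace of an open covering of $\overline V$, and the member covering a boundary point $\hat y\in \overline V\setminus V$ must ``reach out'' to $\hat y$. Your proposed repair --- running the regularity construction at boundary points of $V$ --- produces sets $\check U$ containing such $\hat y$, hence not contained in $V$, and sets $U$ not contained in $\iota^{-1}(V)$, so they cannot be adjoined to a covering of $V$ (resp.\ of $\iota^{-1}(V)$) either; as a test case take $X=(0,1)$, $\hat X=[0,1]$, $V=(0,1)$, where every uniform covering of $X$ has a member $U_{i_0}\supseteq (0,\delta)$ and then $0\in \check U_{i_0}\not\subseteq V$. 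In fairness, the paper disposes of (7) with ``straightforward consequence of (5) and (6)'', and the difficulty is rooted in an ambiguity it leaves unresolved: the $\check U_i$ of (5) must be computed inside the completion $\hat U$ of the subspace being covered (otherwise (5) fails already for the trivial covering $(U)$ of $U$), while the $\check U_i$ of (7) live in $\hat X$ and are meant to sit inside $V$; when $V$ is not closed in $\hat X$ these two demands pull in opposite directions, and a complete write-up must either weaken ``covering of $V$'' to $\bigcup \check U_i\supseteq V$ or keep careful track of both versions of $\check{}\;$. So your plan is the right one, but the enlargement step is precisely where the real content of (7) lies, and as stated it would fail.
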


 \begin{proof} (1) $\check U$ is clearly the union of all open subsets $V\subset \hat X$ such that $\iota^{-1}(V)=U$.
 
  (2) For the identification $\hat U = \overline{\iota(U)}\subset \hat X$, we refer to \cite[II.26.9.cor. 1]{Bou}. Replacing $X$ by $\iota(X)$, we have to show that $\check U\subset \bar U$, \ie any non-empty open subset $V\subset \check U$ meets $U$. But $V\cap X$ is non-empty by density of $X$, and is contained in $U$ since $\check U\cap X= U$.
  It follows that $\check U$ is contained in the interior of $\hat U$.   
  
   If $ \hat U\cap X = U$, the reverse inclusion is immediate.
 
 (3) The inclusion $\check U \cap \check U'\subset   (U\cap U')\check{}\,$ follows from    $\iota^{-1}(\check U \cap \check U')= U\cap U'$. The reverse inclusion is immediate.
 
  (4) follows from $\iota^{-1}(\bigcup \check U_i)= \bigcup U_i$.
 
 (5) follows from the fact that if $(V_i)_i$ is a covering of $\hat U$, so is $((\iota^{-1}(V_i))\check{})_i$, taking into account item 2.
 
 (6) We may replace $X$ by $\iota(X)$. We know that $\hat X$ is regular (as any uniform space), and so is its subset $V$. In particular, any point $x\in V$ has a basis of neighborhoods in $V$ which are interiors $W$ of closed subsets $\hat W$ of $\hat X$. By item 2, $W= (W\cap X)\check{}$.
 
 (7) is a straightforward consequence of items (5) and (6).
   \end{proof}

 \begin{rem}\label{rem}  These definitions and arguments extend to the quasi-uniform situation, when $\hat X$ is {\it a} completion of a quasi-uniform space $X$, except items 6 and 7 which uses the regularity of uniform spaces. Actually, only the fact that any point has a basis of open neighborhoods which are interiors of their closures is used. This holds for instance in the situation of an affinoid adic space $X^{ad}$ endowed with its canonical quasi-uniform structure as above. Indeed, any point has a basis of rational domains $W$. Let $\rho: X^{ad}\to X^{an}$ be the canonical map. Then, $\rho(W)$ is  compact, and $\rho^{-1}\rho(W)$ is the closure of $W$ in  $X^{ad}$ and $W$ is the interior of $\rho^{-1}\rho(W)$.  
 
This does not hold, in contrast, for $\Delta^{ad \dagger}$: the maximal point $\eta$ of the disk does not have a basis of open neighborhoods of the form $\check{U},\, U\subset  \Delta$, since $\check U$ always contains $\vec{\eta\infty}$.\end{rem}

   \subsection{Uniform $G$-topology and uniform sheaves}
        
        Let $X$ be again a uniform space.
        
          \begin{defn}
 The \emph{uniform $G$-topology} of $X$ has for open sets the open subsets $U$ of $X$, and the $G$-coverings of $U$ are the uniform coverings of $U$ (viewed as a uniform subspace of $X$). \end{defn}

\begin{lemma} This defines a saturated Grothendieck topology (\cf \cite[9.1.2]{BGR}), which is functorial with respect to uniformly continuous maps.
 \end{lemma}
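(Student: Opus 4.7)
The plan is to verify the three axioms of a Grothendieck topology, then saturation, then functoriality, in each case by translating the assertion into a coverage statement in the Hausdorff completion via Lemma \ref{l7}(5).

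The trivial covering $\{U\}$ is the $\iota_U$-pullback of the trivial open cover $\{\hat U\}$, so it is uniform. For stability under pullback along an open inclusion $V\hookrightarrow U$, I would write a uniform cover $(U_i)_i = (\iota_U^{-1}(W_i))_i$ of $U$, where $(W_i)_i$ is an open cover of $\hat U$, and then use that the inclusion $V\hookrightarrow U$ extends to an embedding $\hat V\hookrightarrow \hat U$ identifying $\hat V$ with the closure of $\iota(V)$ in $\hat X$; then $(W_i\cap \hat V)_i$ is an open cover of $\hat V$, and a direct check shows $\iota_V^{-1}(W_i\cap \hat V)=V\cap U_i$, so $(V\cap U_i)_i$ is a uniform cover of $V$.

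For transitivity, given a uniform cover $(U_i)_i$ of $U$ and uniform covers $(U_{ij})_j$ of each $U_i$, the crucial point is the inclusion $\check U_i\subset \hat U_i$: the equality $\iota^{-1}(\check U_i)=U_i$ together with the density of $\iota(X)$ in $\hat X$ implies that $\iota(U_i)=\iota(X)\cap \check U_i$ is dense in the open set $\check U_i$, hence $\check U_i\subset \overline{\iota(U_i)}=\hat U_i$. Combining this with Lemma \ref{l7}(5) applied to both covers gives $\bigcup_{i,j}\check U_{ij}\supset \bigcup_i\hat U_i\supset \bigcup_i\check U_i\supset \hat U$, and Lemma \ref{l7}(5) applied to the family $(U_{ij})_{i,j}$ (which is indeed a set-theoretic cover of $U$) concludes. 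For saturation, if a family $(V_\alpha)_\alpha$ of open subsets of $U$ admits a uniform refinement $(U_i)_i$ with $U_i\subset V_{\alpha(i)}$, then $\check U_i\cup \check V_{\alpha(i)}$ is an open subset of $\hat X$ whose $\iota$-preimage is $U_i\cup V_{\alpha(i)}=V_{\alpha(i)}$; the maximality characterization of $\check V_{\alpha(i)}$ in Lemma \ref{l7}(1) forces $\check U_i\subset \check V_{\alpha(i)}$, whence $\bigcup_\alpha \check V_\alpha\supset \bigcup_i\check U_i\supset \hat U$, and Lemma \ref{l7}(5) gives that $(V_\alpha)_\alpha$ is uniform.

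Functoriality follows from the extension of any uniformly continuous map $f:X\to Y$ to a continuous map $\hat f:\hat X\to \hat Y$, which carries $\hat{f^{-1}(W)}$ into $\hat W$ for every open $W\subset Y$: given a uniform cover $(V_i)_i=(\iota_W^{-1}(\tilde V_i))_i$ of $W$ with $(\tilde V_i)_i$ an open cover of $\hat W$, the family $(\hat f^{-1}(\tilde V_i)\cap \hat{f^{-1}(W)})_i$ is an open cover of $\hat{f^{-1}(W)}$, and its $\iota_{f^{-1}(W)}$-pullback is $(f^{-1}(V_i))_i$. The main obstacle is the inclusion $\check U_i\subset \hat U_i$ appearing in the transitivity step, which depends on a careful use of the density of $\iota(X)$ in $\hat X$; once it is in place, the remaining verifications reduce to formal manipulation of the operations $\check{\,}$ and $\hat{\,}$.
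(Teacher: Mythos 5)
Your proof is correct and follows essentially the same route as the paper's: each axiom is translated into a covering statement in the completion via the operations $\check{\ }$ and $\hat{\ }$ of Lemma \ref{l7}, and your transitivity chain $\bigcup_{i,j}\check U_{ij}\supset\bigcup_i\hat U_i\supset\bigcup_i\check U_i\supset\hat U$ is precisely the paper's argument (written there with loose equalities), with the inclusion $\check U_i\subset\hat U_i$ made explicit. The only omission is the other saturation axiom of \cite[9.1.2]{BGR} --- that a subset $V$ of an open $U$ meeting each member of some uniform covering $(U_i)_i$ of $U$ in an open set is itself open --- which is immediate since $V=\bigcup_i(V\cap U_i)$.
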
 
 
 Recall that it is possible to glue saturated Grothendieck topologies (\cf \cite[9.1.3]{BGR}).
 
 \begin{proof} This is a {Grothendieck topology} since:

- $(U)$ is a uniform covering of $U$,

- if $V\subset U$ are open subsets, and $(U_i)_i$ a uniform covering of $U$, then  $(U_i\cap V)_i$ a uniform covering of $V$ (indeed $\cup (\check U_i\cap \hat V)= (\cup \check U_i)\cap \hat V= \hat V$),

- if $(U_i)_i$ is a uniform covering of $U$, and $(U_{ij})_j$ is a uniform covering of $U_i$, then $(U_{ij})_{ij}$ is a uniform covering of $U$ (indeed $\cup  \check U_{ij} = \cup_i   \hat U_i = \hat U$).

It is saturated since:

- If $V$ is a subset of the open set $U$ and if there is a uniform covering $(U_i)_i$ of $U$ such that $V\cap U_i$ is open, then $V$ is open,

- any open covering $(U_i)_i$ of $U$ which admits a uniform refinement $(V_j)_j$ is uniform (indeed,  $(\check V_j)_j$ is a refinement of $(\check U_i)_i$, hence both cover $\hat U$).

\smallskip  It is clear that this Grothendieck topology is 
 {functorial 
  with respect to uniformly continuous maps}, since such maps extend to the completions.
 \end{proof}

        \begin{defn}  A \emph{uniform sheaf} is a sheaf for the uniform $G$-topology. We denote by $\tilde X$ the \emph{uniform topos}, \ie the topos of uniform sheaves on $X$.  
        \end{defn}
        
     It is functorial with respect to uniformly continuous maps $X\to Y$.

     \begin{prop}\label{equiv}      For any uniform space $X$, $\iota$ induces a morphism of sites from $X$ (with its uniform $G$-topology) to $\hat  X$, whose associated map of topoi $   \tilde X \stackrel{\tilde\iota}{\cong} \tilde{\hat X}\,$ is an equivalence. A quasi-inverse  $\,\tilde{\hat X} \stackrel{\tilde h}{\cong} \tilde X\,$ is given by 
        $$h_\ast(\sG)(U) =  \sG(\check U),\; h^\ast(\sF)(V)=  \sF(\iota^{-1}(V)).$$  \end{prop}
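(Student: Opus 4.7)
My plan is to prove the equivalence by passing to the basis $\sB := \{\check U : U \subset X \text{ open}\}$ of the topology of $\hat X$ (Lemma~\ref{l7}(6)) and invoking the comparison (basis) lemma. The bijection $U \leftrightarrow \check U$ (Lemma~\ref{l7}(1),(3)), combined with the translation of uniform coverings of $U$ into coverings of $\check U$ by basis elements (Lemma~\ref{l7}(5)), identifies the uniform $G$-topology site on $X$ with the basis site $(\sB, \sB\text{-coverings})$. Since $\sB$ is a basis closed under finite intersection, the latter topos is, by the comparison lemma, equivalent to $\tilde{\hat X}$.

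I proceed in four short steps. First, I check that $V \mapsto \iota^{-1}(V)$ is a continuous functor from opens of $\hat X$ to opens of $X$ (preservation of coverings reduces via Lemma~\ref{l7}(7) to basis coverings); this exhibits $\iota$ as a morphism of sites and gives $\tilde\iota_*(\sF)(V) = \sF(\iota^{-1}(V)) = h^*(\sF)(V)$. Second, I verify that $h_*(\sG)(U) := \sG(\check U)$ is a uniform sheaf: functoriality follows from Lemma~\ref{l7}(3), and the sheaf condition on a uniform cover $(U_i)_i$ of $U$ is exactly the sheaf condition for $\sG$ on the cover $(\check U_i)_i$ of $\check U$, using (3) and (5) to identify intersections and the covering property. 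Third, I verify that $h^*(\sF)(V) := \sF(\iota^{-1}(V))$ is a sheaf on $\hat X$: I refine any open cover of $V$ to a distinguished basis cover $(\check U_\alpha)_\alpha$ for which $(U_\alpha)_\alpha$ is a uniform cover of $\iota^{-1}(V)$ (Lemma~\ref{l7}(7)), and the sheaf property for $\sF$ on this uniform cover propagates via saturation. Finally, I check that $(h_*, h^*)$ is a quasi-inverse pair: $h_* h^*(\sF)(U) = \sF(\iota^{-1}(\check U)) = \sF(U)$ is immediate, and $h^* h_*(\sG) \cong \sG$ because both sides agree on $\sB$ (since $(\iota^{-1}(\check U))\check{} = \check U$), hence are isomorphic as sheaves on $\hat X$ by the basis lemma.

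The main obstacle will be the third step. For a general open cover $(V_i)_i$ of an open $V \subset \hat X$, the preimage family $(\iota^{-1}(V_i))_i$ need not be a uniform cover of $\iota^{-1}(V)$ in the subspace uniformity, since an open cover of $V$ need not extend to an open cover of the closure $\overline{\iota(\iota^{-1}(V))}$. The crucial point will be to work only with the distinguished basis coverings of Lemma~\ref{l7}(7), for which the preimages are, by construction, uniform covers of $\iota^{-1}(V)$, and to use saturation of Grothendieck topologies together with Lemma~\ref{l7}(6) to propagate the sheaf property from these distinguished coverings to arbitrary ones. This same maneuver is what makes the basis-lemma argument close in the fourth step.
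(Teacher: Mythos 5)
Your overall route is the same as the paper's: both arguments reduce the equivalence to the comparison lemma (SGA~4 III.4.1) applied to the functor $U\mapsto\check U$, which Lemma~\ref{l7} shows to be fully faithful, compatible with finite intersections (item~3), covering--reflecting (item~5), and topologically generating (items~6,~7); your ``basis site $(\sB,\sB$-coverings$)$'' is exactly the image subcategory to which the paper applies that criterion, and your step~2 is the paper's observation that $h^{-1}$ is continuous, so that $h_*=(-)\circ h^{-1}$ takes sheaves to uniform sheaves.

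The gap is in step~3, and in the way you intend to close step~4 through it. The presheaf $V\mapsto\sF(\iota^{-1}(V))$ is \emph{not} a sheaf on $\hat X$ in general, and no refinement--plus--saturation argument can make it one, because the failure is in the gluing (existence) half of the sheaf axiom, not in separatedness. Concretely, take $X=\Delta^\ast_{(met)}$, $\hat X=\Delta$, $\sF$ the uniform sheaf of bounded continuous functions, and $V=\Delta\setminus\{0\}$ covered by the annuli $V_n=\{1/(n+2)<\vert z\vert<1/n\}$ together with $\{1/2<\vert z\vert\leq 1\}$: the restrictions of $1/\vert z\vert$ form a compatible family with no preimage in $\sF(\iota^{-1}(V))=\sF(\Delta^\ast)$. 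Passing to a distinguished refinement as in Lemma~\ref{l7}(7) does not help: the sheaf axiom for the refinement only governs families indexed by that refinement (here some member of a uniform covering of $\Delta^\ast$ necessarily contains a full punctured disc, on which $1/\vert z\vert$ is not a section), and it says nothing about gluing the originally given $s_n\in\sF(V_n)$; in general a presheaf satisfying the gluing axiom for a cofinal family of refinements need not satisfy it for the coverings being refined. The repair --- which is what the paper implicitly does, since its adjunction argument is at the presheaf level and the equivalence itself comes from SGA~4 III.4.1 --- is to take $h^\ast(\sF)$ to be the sheaf determined on the basis $\sB$ by $\check U\mapsto\sF(U)$, i.e.\ the sheafification of your presheaf; the displayed formula for $h^\ast$ is then only valid on $\sB$ (note that $\Delta\setminus\{0\}$ is not of the form $\check U$), which is all your step~4 actually uses. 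With that reading your argument closes and coincides with the paper's.
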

        
        \begin{proof}  Let $h^{-1}$ be the functor from the site $X$ (with the uniform $G$-topology) to the site $\hat X$ given by $h^{-1}(U)= \check U$. By definition of the uniform $G$-topology, it is continuous, \ie induces by composition a functor $\,\tilde{\hat X} \to   \tilde X\,$ which is $h_\ast$. On the other hand, for any $\sF\in \tilde X, \sG\in \tilde{\hat X}$, one has a canonical map 
       ${\rm{Mor}}\,(h^\ast(\sF) , \sG)\to {\rm{Mor}}\,( \sF , h_\ast(\sG))$ which is clearly injective. It is also surjective by item 7 of the Lemma \ref{l7}: the value of a morphism $h^\ast(\sF) \to \sG$ at $V$ can be written as $\sF(\iota^{-1}(V))= \sF( {(U_j)_j)} \to \sG(V)= \sG({(\check U_j)_j)}$ for a suitable uniform covering $(U_j)_j$ of $V\cap X$, and  can thus be expressed in terms of the values of a morphism $\sF \to h_\ast(\sG)$ 
        at the $U_j$'s. Hence $h^\ast$ is left adjoint to $h_\ast$.  Since $h^\ast(\sF) = \iota_\ast $, it is also a right adjoint, hence is exact. Therefore $\tilde h$ comes from a morphism of sites $h$, which is left quasi-inverse to $\iota$. 
      
      It remains to show that $\tilde h$ is an equivalence. We conclude by using the criterium of 
      SGA 4 III. 4.1: 
    \begin{enumerate} \item $h^{-1}$ respects finite limits (indeed these are just finite intersections of open subsets, and item 3 of Lemma \ref{l7} applies),
  \item for any open $V \subset \hat X$, there is a covering $( V_i \to V)_i$ with $V_i\in {\rm{Im}}\,h^{-1}$ (this is item 7 of the lemma),
  \item if $(h^{-1}(U_i)\to h^{-1}(U ))_i$ is a covering, then so is $(U_i\to U)_i$ (this follows from item 5 of the lemma),
  \item $h^{-1}$ is fully faithful (this is immediate, since morphisms between open subsets are just inclusions).  
  \end{enumerate}\end{proof}
      
  If $X$ is Hausdorff complete, $\tilde X$ is nothing but the topos of sheaves on the underlying topological space $X_{top}$.   In general, there is a canonical morphism from $  \widetilde{X_{top}}$ to $\tilde X$, given by  the composition $  \widetilde{X_{top}} \to  \widetilde{\hat{X}_{top}}  \cong \,\tilde{\hat X} \stackrel{\tilde h}{\cong} \tilde X $.
     
    \smallskip  {\it If $X$ is precompact, the topos $\tilde X$ can be defined without reference to $\hat X$, since the uniform coverings of $U\subset X$ are the Tukey open coverings of $U$}. In fact, in this case, $\tilde X$ depend only on the induced proximity (\cf end of \S 1), and uniform sheaves could also be named {\it proximal sheaves}. 
        
        For instance, {\it bounded} continuous numerical functions (\resp {\it uniformly continuous} functions) form a {\it uniform sheaf} on any precompact uniform space (of course, they do not define a sheaf in the usual sense in general).

   \subsection{Examples}
 
\subsubsection{Real blow-up and sectorial uniformity again}\label{exmod}     Real blow-ups do not change the topology of (open) complex manifolds, but change their uniform $G$-topology: there are more uniform coverings. 
 
 For instance, locally constant sheaves in the usual sense define locally constant sheaves for the
sectorial uniform $G$-topology (but not for the standard metric
uniform $G$-topology). More generally, constructible sheaves are constructible for the sectorial uniform $G$-topology.

\smallskip Let us come back to the case of the punctured disk $\Delta_{(sec)}^\ast$ with its sectorial uniformity. 
By the previous proposition (and the remark following it), uniform sheaves on $\Delta_{(sec)}^\ast$ are ``equivalent" to sheaves on the compact cylinder $\Delta^{sec}$, but can be characterized without reference ot $\Delta^{sec}$.
 However, points of $\Delta^\ast$ are not enough to provide a conservative system of fiber functors on $\widetilde{\Delta_{(sec)}^\ast}$: one has to consider also the tangential points, as the following example shows.

A useful uniform sheaf on $\Delta_{(sec)}^\ast$ is ${\sO^{mod}_{\Delta_{(sec)}^\ast}}$, whose sections on $U$ are given by those holomorphic fonctions on $U$ which have {\it moderate growth} any germ of sector (centered at $0$) contained in $U$. For instance, if $U\subset \Delta^\ast $ is defined by ${\rm Re}\, z>0$, then $e^{-1/z}\in {\sO^{mod}_{\Delta_{(sec)}^\ast}}(U)$. 

The inclusion ${\sO^{mod}_{\Delta_{(sec)}^\ast}}\inj  {\sO_{\Delta^\ast}}$ is a monomorphism of uniform sheaves but not an isomorphism. It induces an isomorphism on fibers at any point of $\Delta^\ast$, but not at tangential points.

\subsubsection{Berkovich and Huber spaces again}  
  Let us first recall that for any complete non-archimedean field $k$, there are full embeddings of categories 
  $$ An_k  \inj  Rig_k \inj  Ad_k,\;\, X^{an}\mapsto X^{rig}\mapsto X^{ad}$$
  between the categories of strictly analytic Hausdorff Berkovich $k$-analytic spaces, rigid-analytic varieties and (analytic) adic spaces over $k$, which respect the subcategories of affinoid objects. 
  
  For any $X^{rig}\in Rig_k $, this induces an equivalence of topoi $  \widetilde{X^{ad}} \cong   \widetilde{X^{rig}} $, and for any $X^{an}\in An_k$, and a morphism $ \widetilde{X^{rig}} \stackrel{\tilde{\hat\rho}}{\to}  \widetilde{X^{an}}$ which, combined with the previous equivalence, induces an equivalence between $ \widetilde{X^{an}}$ and the subtopos of $ \widetilde{X^{ad}}$ consisting of sheaves $\sF$ such that for any pair of points $(x,\theta)$ with $\theta\in \overline{\{x\}}$, $\sF_\theta\cong \sF_x$. Similarly, for the respective \'etale topoi, \cf \cite[1.1.11, 8.3, 2.3]{Hu1}.
  
  \smallskip Let us come back to the case of an affinoid space $X = {\rm{Spm}}\, A$ (over $k=\C_p$, to fix ideas).
  Combining the previous equivalences of topoi with Proposition \ref{equiv} (taking into account Remark \ref{rem}), one obtains:  
  
  \begin{prop}  The commutative square \eqref{CD} induces a commutative square of topoi
           \begin{equation}\label{CD2}\begin{matrix}  \widetilde{X^{rig}}   &  \stackrel{\tilde{\hat\rho}}{\to} & \widetilde{X^{an}}  \\  {\uparrow}\cong  &  & \cong \uparrow  \\ \widetilde{X_{(ad)}}& \stackrel{\tilde\rho}{\to}& \widetilde{X_{(an)}}.  \end{matrix}\end{equation}
   \end{prop}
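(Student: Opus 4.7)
The strategy is to assemble the square \eqref{CD2} from three already-prepared ingredients: the equivalence of Proposition \ref{equiv} applied to the uniform space $X_{(an)}$, a quasi-uniform analogue of Proposition \ref{equiv} applied to $X_{(ad)}$ (justified by Remark \ref{rem}), and Huber's equivalence $\widetilde{X^{ad}}\cong\widetilde{X^{rig}}$ recalled just above. Once these three are in place, only the compatibility of $\tilde\rho$ with $\tilde{\hat\rho}$ remains to be verified.

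For the right vertical arrow, I apply Proposition \ref{equiv} to the precompact uniform space $X_{(an)}$ whose Hausdorff completion is $X^{an}$ (Proposition 2.2.1); this gives $\widetilde{X_{(an)}}\iso\widetilde{X^{an}}$. For the left vertical arrow, I first need to check that the proof of Proposition \ref{equiv} survives the dropping of symmetry in the case $(X_{(ad)},X^{ad})$. Inspecting the proof, the only places where symmetry (via regularity) entered were items (6) and (7) of Lemma \ref{l7}, which required a basis of open neighborhoods each of which is the interior of its closure. Remark \ref{rem} supplies exactly such a basis: the rational domains $W\subset X^{ad}$, for which $\rho^{-1}\rho(W)$ is the closure of $W$ and $W$ is the interior of this closure. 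All other steps (items (1)-(5) of Lemma \ref{l7}, the adjunction computation, and the application of SGA 4 III.4.1) are symmetry-free and carry over verbatim. This yields $\widetilde{X_{(ad)}}\iso\widetilde{X^{ad}}$, which I then compose with the Huber equivalence $\widetilde{X^{ad}}\cong\widetilde{X^{rig}}$ to obtain the left vertical arrow of \eqref{CD2}.

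For commutativity, I invoke naturality. The map $\rho$ is quasi-uniformly continuous by Proposition 2.4.1, so it induces a morphism of $G$-topoi $\tilde\rho$; likewise $\hat\rho$ induces $\tilde{\hat\rho}$. Under the explicit quasi-inverses $h_\ast(\sG)(U)=\sG(\check U)$ and $h^\ast(\sF)(V)=\sF(\iota^{-1}(V))$ furnished by Proposition \ref{equiv}, the two compositions around the square \eqref{CD2} agree because the square \eqref{CD} of spaces yields the identity $\hat\rho^{-1}(\check V)=(\rho^{-1}V)\check{}\,$ (an easy consequence of $\iota_{(ad)}\circ\rho=\hat\rho\circ\iota_{(an)}$ together with item (1) of Lemma \ref{l7} characterizing $\check{}$ as the largest open with a given $\iota^{-1}$-preimage). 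This identity translates directly into the required natural isomorphism of functors on uniform sheaves.

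The single real obstacle is the quasi-uniform extension of Proposition \ref{equiv}: one must audit each step to confirm that symmetry was used nowhere except in the production of the neighborhood basis of items (6)-(7), and that the substitute basis provided by rational domains on $X^{ad}$ suffices. Everything else is formal: the equivalence $\widetilde{X^{ad}}\cong\widetilde{X^{rig}}$ is quoted, the right vertical equivalence is a direct application of an already-proven result, and commutativity is a naturality check.
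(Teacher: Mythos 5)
Your proposal is correct and follows the same route as the paper, whose entire argument is the phrase ``combining the previous equivalences of topoi with Proposition \ref{equiv} (taking into account Remark \ref{rem})'': the right vertical arrow from Proposition \ref{equiv} applied to $X_{(an)}$, the left one from the quasi-uniform extension of that proposition (where your audit that symmetry enters only through items (6)--(7) of Lemma \ref{l7}, repaired by the rational-domain basis of Remark \ref{rem}, is exactly the intended reading) composed with Huber's $\widetilde{X^{ad}}\cong\widetilde{X^{rig}}$, and commutativity by functoriality. One small caution: item (1) of Lemma \ref{l7} only gives the inclusion $\hat\rho^{-1}(\check V)\subset(\rho^{-1}V)\check{}\,$, not the asserted equality, but this is harmless since commutativity already follows from $\iota_{(ad)}^{-1}\hat\rho^{-1}=\rho^{-1}\iota_{(an)}^{-1}$ (the square \eqref{CD}) applied to the pushforward description $(\iota_\ast\sF)(V)=\sF(\iota^{-1}(V))$, with no need to pass through $\check{}\,$.
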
 
   
   For instance, the abelian sheaves $\sO$ and $\Omega^1$ exist in each of these topoi, and correspond to each other. The morphism $d: \sO \to \Omega^1$ is not an isomorphism (failure of the Poincar\'e lemma): it induces an isomorphism on fibers at any point of $X$, but not at points of $X^{an}= \widehat{X_{(an)}}$ which do not lie on $X$.

       \subsection{Another notion of uniform sheaves}\label{ano} 
   Instead of patching uniformly compatible local data, one may want to {\it patch compatible uniform local data}. This leads to a completely different notion of ``uniform sheaves", as objects of the category 
      $2$-$\rm{colim}\,_{E\in \sU}\{$Sheaves on $E\}$.
 
Concretely, objects are pairs $\sF = (E\in \sU, \, \sF_E \in \tilde E)$,
morphisms are defined by   $\rm{Mor}(\sF_1 , \sF_2 ) = \rm{colim}_{E\subset E_1\cap E_2}\,   {\rm Mor}(\sF_{E_1\mid E}, \sF_{E_2\mid E}).$
Global sections are given by $\Gamma \sF  = \rm{colim}_{E\in \sU} \Gamma( \sF_E)$  (which is not the same as  $\Gamma(\delta^\ast \sF)$ since entourages do not
form a basis of neighborhoods of the diagonal
in general). One can define a cohomology theory for such objects in a straightforward way.  
       
       This notion seems less useful (in the area of differential equations which we have in mind) than the other notion of uniform sheaves. On the other hand, it extends in a straightforward way to quasi-uniformities and to generalized uniformities in the sense of Appendix 1.

 \bigskip
  \section{De Rham cohomology and uniform sheaves}

  {\flushright{ \begin{quote}{\small{``En g\'eom\'etrie
alg\'ebrique, un point est beaucoup plus petit
qu'en g\'eom\'etrie analytique: une fonction sur
le compl\'ement a une singularit\'e $1/z^n$ (resp. essentielle).
On peut interpoler entre les deux."}} \end{quote}}}
 
\smallskip\rightline{\small{P. Deligne \cite{De2}.}}

\medskip
 \subsection{Complex case}   
\subsubsection{Algebraic vs. analytic De Rham cohomology}
Let now $X$ be the complement of a finite set $Z$ of points in a projective smooth curve $\bar X$ over $\C$. 
 Let ${\sM}$ be an algebraic coherent module on $X$ with connection $\nabla$. Its algebraic De Rham cohomology is
$$H^\ast_{DR}( \nabla) = {\mathbb H}^\ast ( {\sM} \stackrel{\nabla}{\to} {\sM}\otimes \Omega^1_X).$$ 
 Let $({\sM}^{an}, \nabla^{an})$ be the associated analytic coherent module with connection of the analytic curve $X^{an} $ attached to $X$. By the Poincar\'e lemma, $\nabla^{an}$ is an epimorphism, and we denote its kernel by $\sL^{an}  $ (local system of analytic solutions). 
   The analytic De Rham cohomology is  \begin{equation} H^\ast_{DR}( \nabla^{an}) = {\mathbb H}^\ast ( {\sM}^{an} \stackrel{\nabla^{an}}{\to} {\sM}^{an}\otimes \Omega^1_{X^{an}}) \cong H^\ast(X^{an},  \sL^{an}).\end{equation} 
 
The canonical morphism
 $H^\ast_{DR}( \nabla)  \stackrel{\phi}{\to} 
 H^\ast_{DR}( \nabla^{an}) $  
is an isomorphism if $\nabla$ regular \cite{De1}.
 In the presence of irregular singularities, it is not: one has the Deligne index formula \cite{De1} which involves the irregularities (= Fuchs-Malgrange numbers) at the points of $Z$: 
\begin{equation}\label{chi}\chi_{DR}(\nabla)= \chi_{DR}(\nabla^{an}) + \sum_{x\in Z}\, ir_x (\nabla).\end{equation}
with $\chi_{DR}(\nabla^{an}) = {\rm rk}\,{\sM}\cdot \chi(X^{an})=  {\rm rk}\,{\sM}\cdot \chi_{DR}((\sO_X, d))$.

\subsubsection{The uniform viewpoint}\label{comb}
 
In order to analyse the irregular singularities,  one usually introduces the real blow up of $\bar X^{an}$ (a manifold with boundary) and some sheaves on it, \cf \eg \cite[3.4, 5b]{Sa}. 

Alternatively, one can use the sectorial uniformity $X_{(sec)}$ on $X(\C)$ (obtained by glueing sectorial uniformities around the points in $Z$) and some uniform sheaves. Recall that the uniform $G$-topology of $X_{(sec)}$ has the same open sets as $X(\C)$ but less coverings.

Let  ${\sO^{mod}_{X_{(sec)}}}\subset \sO_{X^{an}}$ be the uniform (or proximal) sheaf of analytic functions with moderate growth around $Z$, \cf \ref{exmod}.  This makes $X_{(sec)}$ a ringed site. 
  One has  $\Gamma ({\sO^{mod}_{X_{(sec)}}})= \Gamma ( \mathcal O_{X})$ (rational functions with poles at $Z $).

 \smallskip Let $(\sM^{mod}, \nabla^{mod})$ be the coherent ${\sO^{mod}_{X_{(sec)}}}$-module with connection obtained from $(\sM, \nabla)$ by 
 tensoring with $\sO^{mod}_{X_{(sec)}}$.  By the moderate Poincar\'e lemma (\cf \cite[3.4, 5b]{Sa} and \cite[7.3]{PS}),  $\nabla^{mod}$ is an  
 epimorphism of uniform sheaves (via the comparison of topoi \ref{equiv}), and we denote its kernel by $\sL^{mod}$ (uniform sheaf of moderate solutions).

 \begin{thm} One has canonical isomorphisms
 \begin{equation} H^\ast_{DR}( \nabla) \cong H^\ast_{DR}( \nabla^{mod})\cong  H^\ast(X_{(sec)},  \sL^{mod}).\end{equation} \end{thm}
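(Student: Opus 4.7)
The plan is to dispose of the right-hand isomorphism first via the moderate Poincar\'e lemma, and then obtain the left-hand isomorphism by showing that, on the affine curve $X$, both algebraic and moderate De Rham cohomology collapse to the cohomology of the same two-term complex of global sections.

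For the isomorphism $H^\ast_{DR}(\nabla^{mod}) \cong H^\ast(X_{(sec)}, \sL^{mod})$: by the moderate Poincar\'e lemma cited from \cite{Sa}\cite{PS}, the two-term complex of uniform sheaves
\[
0 \longrightarrow \sL^{mod} \longrightarrow \sM^{mod} \stackrel{\nabla^{mod}}{\longrightarrow} \sM^{mod} \otimes \Omega^1 \longrightarrow 0
\]
is exact in $\widetilde{X_{(sec)}}$. Thus $\sM^{mod} \stackrel{\nabla^{mod}}{\to} \sM^{mod} \otimes \Omega^1$ is a resolution of $\sL^{mod}$, and taking hypercohomology in the uniform topos gives the desired isomorphism. (Via the equivalence $\widetilde{X_{(sec)}} \cong \widetilde{\widehat{X_{(sec)}}}$ of Proposition \ref{equiv}, hypercohomology and cohomology on the uniform site are unambiguous.)

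For the isomorphism $H^\ast_{DR}(\nabla) \cong H^\ast_{DR}(\nabla^{mod})$: since $X$ is an affine smooth curve, coherent $\sO_X$-modules are Zariski-acyclic, so the algebraic De Rham hypercohomology reduces to the cohomology of the two-term complex $\Gamma(X, \sM) \to \Gamma(X, \sM \otimes \Omega^1_X)$. I would then show the parallel statement on the uniform side, namely that $\sM^{mod}$ and $\sM^{mod} \otimes \Omega^1$ are acyclic in $\widetilde{X_{(sec)}}$, so that $\mathbb{H}^\ast(X_{(sec)}, \sM^{mod,\bullet})$ is also the cohomology of its complex of global sections. Finally, from $\Gamma(X_{(sec)}, \sO^{mod}_{X_{(sec)}}) = \Gamma(X, \sO_X)$ stated in the text and the fact that a coherent module with connection on the smooth affine curve $X$ is locally free (and one may work locally on a Zariski cover by small affines where $\sM$ trivializes), one deduces $\Gamma(X_{(sec)}, \sM^{mod}) = \Gamma(X, \sM)$, and similarly for $\sM \otimes \Omega^1_X$. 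The two two-term complexes of global sections are then identified, yielding the desired isomorphism.

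The main obstacle is the acyclicity step: proving that $H^i(X_{(sec)}, \sM^{mod}) = 0$ for $i>0$. I would argue this through the equivalence of topoi with sheaves on the compact manifold-with-boundary $\widehat{X_{(sec)}}$ (the real blow-up of $\bar X$ along $Z$), where a moderate $\sC^\infty$-Dolbeault resolution of $\sM^{mod}$ is available: the sheaf of $\sC^\infty$ functions with moderate growth along the boundary circles admits partitions of unity subordinate to any open covering of the real blow-up (this is the fine-sheaf property underlying Sabbah's and Malgrange's asymptotic analysis), hence is soft, and the moderate $\bar\partial$-Poincar\'e lemma on sectors gives a soft resolution whose global sections compute the cohomology. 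The paracompactness of $\widehat{X_{(sec)}}$ then forces vanishing in positive degrees. This is the only step that requires genuine analytic input beyond the formal machinery of uniform sheaves developed in \S 3.
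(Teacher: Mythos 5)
Your treatment of the second isomorphism is correct and is exactly what the paper intends: the moderate Poincar\'e lemma makes $\sM^{mod}\stackrel{\nabla^{mod}}{\to}\sM^{mod}\otimes\Omega^1$ a resolution of $\sL^{mod}$ in $\widetilde{X_{(sec)}}$, and the topos equivalence of Proposition \ref{equiv} transports everything to the completion. (The paper itself gives no further argument: it declares the whole theorem to be a translation of \cite[3.4]{Sa} through that equivalence.)

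The first isomorphism is where you have a genuine gap. Your plan is to show that $\sM^{mod}$ and $\sM^{mod}\otimes\Omega^1$ are acyclic with global sections $\Gamma(X,\sM)$ and $\Gamma(X,\sM\otimes\Omega^1_X)$, but the argument you give for acyclicity does not prove it: a soft (fine) moderate Dolbeault resolution on the compact completion $\widehat{X_{(sec)}}$ only shows that $H^i(X_{(sec)},\sM^{mod})$ is computed by the complex of \emph{global} moderate $(0,\bullet)$-forms; paracompactness does not ``force vanishing in positive degrees''. (Compare: the Dolbeault resolution of $\sO$ on a compact curve of genus $g>0$ is fine, yet $H^1\neq 0$.) To get the vanishing you must actually solve $\bar\partial u=f$ globally with moderate growth, and the standard way to do this is the route of \cite[3.4]{Sa}: the theorem of Majima--Malgrange--Sabbah that for the real blow-up $\varpi:\widehat{X_{(sec)}}\to \bar X^{an}$ one has $R\varpi_\ast\sO^{mod}\cong\sO_{\bar X^{an}}(\ast Z)$ concentrated in degree $0$, followed by $H^{>0}(\bar X^{an},\bar\sM(\ast Z))=\varinjlim H^{>0}(\bar X^{an},\bar\sM(nZ))=0$ via GAGA and Serre vanishing on the projective curve $\bar X$. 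The same point affects your computation of global sections: $\Gamma(X_{(sec)},\sM^{mod})=\Gamma(X,\sM)$ is a global GAGA/Liouville-type statement (already $\Gamma(\sO^{mod})=\Gamma(\sO_X)$ is), and cannot be deduced by trivializing $\sM$ on a Zariski cover, since global sections do not localize. Once these two inputs are in place your ``collapse to global sections'' strategy does work on the affine curve $X$ and gives a slightly more direct comparison than pushing the whole De Rham complex down to $\bar X^{an}$; but as written the analytic core of the statement is asserted rather than proved.
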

  
Moreover $\sL^{mod}$ is a {\it constructible} uniform sheaf on $X_{(sec)}$  (the exponentials $e^{Q(1/{z)}}$ which occur typically in the Turrittin formal decomposition around the points of $Z$ are sections of  ${\sO^{mod}_{X_{(sec)}}}$ in suitable sectors), and this leads to a {\it combinatorial proof of Deligne's index formula}.

 All this is nothing but a translation of \cite[3.4]{Sa} in the language of uniform sheaves (which allows not to leave the topological space $X(\C)$), using the comparison of topoi \ref{equiv}.

 \subsubsection{Finer zooms}
    Complex analytic singularities are not ``isolated", but surrounded by a ``halo" of infinitely near singularities, which Deligne's disc unfolds and materializes \cite{De2}.
   This disc is obtained by compactifying $\mathbb C$ with a circle: $ \mathbb C \cup (S^1\times ]0,\infty])/ ( \mathbb C^\ast \sim (S^1\times ]0,\infty[), $ 
 and is endowed with a sheaf of rings $\tilde{\mathcal O}$, following the rule 
 
$$I\times ]k', k''[ \mapsto 
  \frac{\{{\rm{analytic\,functions\, with\, exp.\, growth}}  \leq k' \,{\rm{in}}\,  I\}}
           {\{{\rm{analytic\, functions\, with\, exp.\, decay}} \geq  k''  \,{\rm{in}}\,I\}}.$$
Via asymptotic expansions, global sections of $\tilde{\mathcal O}$ in $k'<k$ can be identified with Gevrey series of order $1/k$.
The definition sets of exponentials $e^{Q(1/z)}$ as sections of $\tilde{\mathcal O}$ give rise to ``Deligne's daisies". Counting their petals leads to a combinatorial proof of refined Gevrey index formulas \cite{LP}.

\smallskip On the other hand, the subanalytic site also allows a fine analysis of singularities: for instance, it allows to recover, using the sheaf of tempered solutions, the exponentials $e^{Q(1/z)}$ which occur in the formal decomposition (up to a multiplicative scalar) \cite{Mo}.

\smallskip It would be interesting to describe these finer zooms in terms of suitable uniform sheaves for a suitable uniform structure on $X$.

 \subsection{$p$-adic case} 
 
  \subsubsection{Algebraic vs. analytic De Rham cohomology}  
 Let now $X$ be the complement of a finite set $Z$ of points in a projective smooth curve $\bar X$ over $\C_p$. 
 Let ${\sM}$ be an algebraic coherent module on $X$ with connection $\nabla$. We denote by $({\sM}^{an}, \nabla^{an})$ the associated analytic coherent module with connection of the analytic curve $X^{an} $ attached to $X$.
 
 In this situation, $\sL^{an} ={\rm Ker}\, \nabla^{an}$ is no longer locally constant, not even constructible (for instance, in the case of the homogeneous differential equation of order $2$ satisfied by $\log z$ on $\Delta^\ast$, the dimension of the fibres of $\sL^{an}$ at points of type 2 drops to $1$). 
 
 According to F. Baldassarri, in the absence of Liouville numbers among Turrittin exponents (for instance, if $X$ and $\nabla$ are defined over a number field), formal solutions at $Z$ converge (the formal decomposition is analytic) \cite{Ba1} and the canonical morphism
 $$H^\ast_{DR}( \nabla)  \stackrel{\phi}{\to} 
 H^\ast_{DR}( \nabla^{an}) $$  
is an isomorphism \cite{Ba2} (this actually holds in any dimension, \cf \cite[ch. 4]{AB}).

 \begin{rem}  Since $\bar X^{an}$ can be covered by finitely many affinoids, it can be described as the completion of $\bar X(\C_p)$ with respect to a uniformity defined in terms of rational domains \cite[4.3, rem. 3]{Bak}, This leads to an interpretation of analytic De Rham cohomology as De Rham cohomology of a suitable uniform sheaf with connection on $X(\C_p)_{(an)}$.
   For coherent modules, working on analytic spaces or adic spaces (or in the rigid context) makes no difference, and one could also use the De Rham cohomology of $\nabla^{ad}$ (and a quasi-uniform interpretation on $X(\C_p)_{(ad)}$).
\end{rem}

   \subsubsection{The overconvergent setting} Let us assume that $\bar X$ comes from a projective smooth $\sO_{\C_p}$-scheme $\bar\sX$, with reduction $\bar\sX_{\bar\F_p}$, and that $Z$ comes from a \'etale closed subscheme $\sZ\subset \sX$. 
   
 It is then natural to consider the cohomology of $\nabla$ over the complement $V $ of the tube of $\sZ_{\bar\F_p}$, which is an affinoid domain in $X^{an}$ - or rather (to ensure finite-dimensionality), the cohomology of $\nabla$ on the corresponding ``dagger space". 
 As in \cite[5.1]{Berk2}, we see the latter as a germ of neighborhoods $V'$ of $V$ (say, in $X^{an}$), and associate to it the dagger algebra $$A^\dagger = {\rm{colim}}_{V'}\, \sO(V')$$  
(for instance, in the case of $\Delta = {\rm Spm} \,\C_p\langle T\rangle\subset \mathbb A^1$, $\C_p\langle T\rangle^\dagger = \bigcup_{r>1} \C_p\langle r^{-1}T\rangle$). One has ${\rm{Spm}}\, A^\dagger = V$.
  The category of sheaves on such germs is defined as the 2-colimit of the categories of sheaves of
sets on open neighborhoods of $V$ (\cf \cite[5.2]{Berk2} [SGA4, Exp. VI]; the situation is similar to that of \ref{ano}).
 
 \begin{rem} A simpler approach consists in using the overconvergent adic space $V^{ad\dagger}$ (obtained, as a set, by attaching to $V^{ad}$ one tangential base point for each point in $ Z $, \cf \ref{BH}). It is endowed with a sheaf of rings which coincides with $\sO_{V^{ad}}$ on ${V^{ad}}$, and has $A^\dagger$ as ring of global sections.
  One has $V^{ad\dagger }= \lim\, V'^{ad}$, whence a map $V^{ad\dagger}\to  ``\lim"\, V'$.  This allows to replace ind-sheaves on $``\lim"\, V'$ by genuine sheaves on $V^{ad \dagger}$.

 The \'etale site of $V^{ad\dagger}$ endowed with $\sO_{V^{ad\dagger}}$ is reminiscent of the subanalytic site endowed with the sheaf of tempered analytic functions (\ie analytic functions with moderate growth at the boundary) in the complex case, in its capacity of capturing  the exponentials $e^{Q(1/{z)}}$ which occur in the formal decomposition of differential modules \cite{Mo}. Note that the defining sets of the $e^{Q(1/{z)}}$'s ($p$-adic analogs of the petals of Deligne's daisies) are \'etale neighborhoods of $V^{ad\dagger}$ (\eg for the Dwork exponential $e^{\pi/z}$, it is the Artin-Scheier cover defined by $ y^{-p} - y^{-1} = z^{-1}$). 
   \end{rem}

  \subsubsection{Overconvergent De Rham cohomology}   
  
 Let $( M^{\dagger}, \nabla^{\dagger})$ be the $A^\dagger$-module with connection obtained from $(\sM, \nabla)$ by taking global sections and tensoring with $A^\dagger$.  There is a natural map
 
  $$H^\ast_{DR}( \nabla)  \stackrel{\phi}{\to} H^\ast_{DR}( \nabla^{\dagger}) = { H}^\ast (  {M}^\dagger \stackrel{\nabla^\dagger}{\to} {M}^\dagger \otimes_{\sO(X)} \Omega^1(X )).$$    
Let us assume that $( M^{\dagger}, \nabla^{\dagger})$ comes from an overconvergent $F$-isocrystal $\sE$ on $\sX_{\bar\F_p}$ (so that $H^\ast_{DR}( \nabla^{\dagger})= H^\ast_{rig}(\sE)$).  If $\nabla $ is regular, $\phi$ is an isomorphism (this actually holds in any dimension \cite{BC}), but it is not in general.

\smallskip The $p$-adic index formula \cite[5.0-12]{CM} concerns $H^\ast_{DR}(\nabla^\dagger)$ rather than $H^\ast_{DR}(\nabla)$ and involves the $p$-adic irregularities:
\begin{equation}\label{pchi}\chi_{DR}(\nabla^\dagger)= {\rm{rk}}\, \sM \cdot \chi_{rig}(\sX_{\bar\F_p})+ \sum_{x\in \sZ_{\bar\F_p}}\,{\rm{ir}}_x(\nabla^\dagger).\end{equation} 

  \begin{rem} The proof given in \loccit and summarized in \cite[4.4.1]{Ke}  is global, and uses at some point GAGA and Deligne's index formula over $\C$. It is used in K. Kedlaya's ``$p$-adic proof" of the Weil conjectures \cite[6.5.3]{Ke}... which is thus not completely $p$-adic! Fortunately, F. Baldassarri \cite{Ba4} has recently given a purely $p$-adic local proof of \eqref{pchi} in the spirit of Robba's original approach to the $p$-adic index formula. His result does not assume the overconvergence of $\nabla^\dagger$ (which is too restrictive a condition if one wishes to compare $\chi_{DR}( \nabla) $ and $\chi_{DR}( \nabla^{\dagger})$ by interpolation, using neighborhoods of $V$).  \end{rem}
  
 \smallskip The $p$-adic local monodromy theorem combined with the Madsuda-Tsuzuki-Crew identification of $p$-adic irregularities with Swan conductors (\cf \cite[7.1.2]{A2}) allows to write \eqref{pchi} in the form: 
  \begin{equation}\label{pchisw}\chi_{DR}(\nabla^\dagger)= {\rm{rk}}\, \sM \cdot \chi_{rig}(\sX_{\bar\F_p})+ \sum_{x\in \sZ_{\bar\F_p}}\,{\rm{sw}}_x(\sE)\end{equation} 
(\cf  \cite[4.4.1]{Ke} for details). For instance, if $\sE$ is a unit-root $F$-isocrystal, it corresponds to $p$-adic \'etale sheaf $\sL_{\bar\F_p}$ on $ \sX_{\bar\F_p}$ with finite local monodromy \cite{Cr}, one has $H_{DR}^\ast(\nabla^\dagger)\cong H^\ast(\sX_{\bar\F_p, et}, \sL_{\bar\F_p})$, and \eqref{pchisw} corrresponds to the Grothendieck-Ogg-Shafarevich formula for $\sL_{\bar\F_p}$. 

One might hope for an interpretation of \eqref{pchisw} more in the spirit of the combinatorial interpretation \ref{comb} of Deligne's index formula. In the special case where $\sL_{\bar\F_p}$ has finite global monodromy (which is for instance the case if ${\rm{rk}}\, \sM=1$ \cite{Cr}), $\nabla^\dagger$ is isotrival, and the Coleman-Berkovich \'etale sheaf $\sS$ \cite{Berk2}, or rather its avatar on $V^{ad\dagger}$, would serve as a substitute for the sheaf $\sO_{X_{(sec)}}^{mod}$: it restores the Poincar\'e lemma in the $p$-adic situation \cite[9.3]{Berk2} (\ie tensoring with it makes the connection an epimorphism), and the kernel of  ${\nabla^\dagger}_\sS$ is a locally constant \'etale sheaf whose cohomology coincides with $H^\ast_{DR}( \nabla^{\dagger})$.

\begin{rem} R. Huber \cite{Hu2} has attached to tangential base-points in $V^{ad\dagger}$ (corresponding to $\sZ_{\bar\F_p}$) some local monodromy groups, and used them successfully in the context of $\ell$-adic \'etale sheaves on $V^{ad\dagger}$, $\ell\neq p$ (\cf also \cite{Ra}). It would be very desirable to build a similar theory in the case of $p$-adic coefficients, especially in connection with the $p$-adic local monodromy theorem and the $p$-adic index formula.   \end{rem}

\bigskip
  \section*{Appendix 1. Uniformity and stratifications}
 
 \subsection{Uniformity, ``Grothendieck style"} 
   Let $\sC$ be a category. Let ${T} $ be an object of
$\sC$ whose finite powers exist in $\sC$.
An {\it entourage} of ${T}$ is given by an object $E$
and a triple of morphisms
${T}\stackrel{\delta}{\to} E, \; E\stackrel{p_1,p_2}{\to} {T}$ such that $p_i\circ \delta = 1_{T}$. 
 They form a category whose  final object is ${T}^2$.
      
 \begin{defn} A {\it uniformity} on ${T}$ is a sieve $\sU$ of entourages
of ${T}$, stable by  fibred product (over ${T}^2$), and
such that
 
- for any $E\in \sU$, there is $E'\in \sU$ such that
$(E'\times {T})\times_{{T}^3}({T}\times E')$ (together with $\delta$  and $p_1,p_3$)
exists in $\sU$ and maps to $E$,
 
- for any $E\in \sU$, the entourage obtained by
switching $p_1$ and $p_2$ is in $\sU$.  \end{defn}

  \begin{ex}
Let $S$ be a topological space, and $\sC_S$ be the category
of topological spaces ${T}$ with a surjective continuous
map to $S$. A uniformity then corresponds
to the usual notion, except that ${T}^2$ is
replaced by ${T}\times_S {T}$; the induced uniformity in
the  fibers should be compatible with the topology,
and vary continuously on $S$.

This example is fundamental in  {\it sectional representation
theory} (Dauns, Hofmann, et al. \cite{DH}),
which aims at a general recipe for representing
topological algebras $B$ as algebras of continuous
(or bounded continuous) sections of
objects of $\sC_S$ (for suitable ``spectra" $S$), endowed
with a uniformity.
This recipe generalizes both Grothendieck's
construction in the commutative discrete case
($B = \Gamma \sO_S$ viewed as sections of an \'etal\'e space)
and Gelfand's construction in the commutative
 $C^\ast$ case ($B = \Gamma ((\R\times S)/S)$).\end{ex}

  \subsection{Stratifications}
  Let $\sU$ be a uniformity on ${T}$, which we assume to be a ringed space (or a
topos).
  
 \begin{defn}  A {\it $\sU$-stratification} is an $\sO_{T}$-module $\sM$ together with
an isomorphism $p_1^\ast\sM\cong  p_2^\ast\sM$  (parallel transport)
over some $E\in \sU$, which $\delta^\ast$   maps to
$1_\sM$, and with the usual cocyle relation (on
$(E'\times {T})\times_{{T}^3}({T}\times E')$ for $E'$ as above). \end{defn}
   
  \begin{ex} Let ${T}$ be the complement of  finitely many
points $x_j$  in a projective complex curve   $\bar {T}$, 
and let $\sM$ be a coherent analytic module on ${T}$. 
   A connection $\nabla$ on $\sM$  induces a stratification on any simply-connected open $U\subset {T}$ 
  (e.g. a sector of angle
$<2\pi$  pointing at $x_j$).  If one replaces ${T}$ by its \'etale site, one gets a stratification without having to restrict to $U$.
    \end{ex}
  
  \begin{ex}   Grothendieck's $n$-stratifications
are $\sU$-stratifications for the uniform structure
defined by the $n$-th infinitesimal neighborhood
of the diagonal.
  Convergent and overconvergent isocrystals (\cf \cite{Bert}\cite[3.4]{LS}) are
$\sU$-stratifications for the (non-separated) uniform
structure defined by the tube of the diagonal (\cf \ref{form})
- or some strict neighborhoods thereof.  \end{ex}

\bigskip
  \section*{Appendix 2. Uniformity and bornology}

 In the mid 30s, bounded sets became important in functional analysis in the hands of von Neumann and Kolmogorov, who used them to define locally convex polar topologies. 
 Bornological rings and modules also play a crucial role in the above-mentioned sectional representation theory \cite{DH}. They have been recently reconsidered by F. Baldassarri as a possible common framework for complex-analytic geometry and overconvergent $p$-adic geometry.

 \begin{defn} A {\it bornology} on a set $X$ is a covering $(B_i)_i$ which is stable under inclusions and finite unions. The $B_i$'s are called {\it bounded sets}, and $(X, (B_i)_i)$ a {\it bornological space}.  A {\it bounded map} between bornological spaces is a map which sends bounded subsets to bounded subsets. 
  \end{defn}
 
 For instance, given a topological space $X$, the subsets with quasi-compact closure form a bornology, which is functorial in $X$.

\smallskip Any uniformity gives rise to two {\it bornologies}: the {\it precompact bornology} (consisting of the precompact subsets), and the (coarser) {\it canonical bornology} (consisting of subsets $B$ such that for any entourage $E$ there is a finite set $Z\subset X$ and a positive integer $n$ such that $E^{\circ n}(Z)$ contains $B$, \cite[II, \S 4, ex. 7]{Bou}). 
   Both play a crucial role in the classical context of topological spaces\footnote{in a locally convex space, the canonical bornology coincides with the von Neumann bornology (consisting of subsets absorbed by any neighborhood of the origin). An interesting critical evaluation of the role of bornologies in functional analysis may be found in \cite{Wa}.} -  \cf \cite{Me}, where (uniform) topological concepts are carefully compared to bornological concepts. 
 
More generally, any quasi-uniformity gives rise to a precompact bornology and a canonical bornology. One can then use functorial quasi-uniformities (such as the finest compatible quasi-uniformity) to attach functorially a bornology to a topology. It might be interesting to study this family of functors in the spirit of \cite{Br}.

  \end{sloppypar}

 \end{document}